\def\1{{1\mkern-7mu1}}
\newcommand\bquote{\begin{quote}}
\newcommand\equote{\end{quote}}
\newcommand\bsmall{\begin{small}}
\newcommand\esmall{\end{small}}
\newcommand\bfootnotesize{\begin{footnotesize}}
\newcommand\efootnotesize{\end{footnotesize}}
\let\cite=\citealt
\newcommand{\eb}[1]{{\itshape\bfseries#1}{\index{#1}}}
\renewcommand{\emph}{\eb}
\renewcommand{\Gamma}{\varGamma}
\renewcommand{\Pi}{\varPi}
\renewcommand{\Sigma}{\varSigma}
\DeclareMathOperator{\Ad}{Ad}
\DeclareMathOperator{\Aut}{Aut}
\DeclareMathOperator{\End}{End}
\DeclareMathOperator{\Gal}{Gal}
\DeclareMathOperator{\GL}{GL}
\DeclareMathOperator{\Hom}{Hom}
\DeclareMathOperator{\id}{id}
\DeclareMathOperator{\Ker}{Ker}
\DeclareMathOperator{\Lie}{Lie}
\newcommand{\Rep}{\mathsf{Rep}}
\newcommand{\ad}{\mathrm{ad}}
\newcommand{\diag}{\mathrm{diag}}
\newtheorem{X}{X}[section]
\newtheorem{plain}[X]{}
\newtheorem{E}[X]{}
\newtheorem{corollary}[X]{Corollary}
\newtheorem{lemma}[X]{Lemma}
\newtheorem{proposition}[X]{Proposition}
\newtheorem{theorem}[X]{Theorem}
\newtheorem{definition}[X]{Definition}
\newtheorem{example}[X]{Example}
\newtheorem{exercise}[X]{Exercise}
\newtheorem{remark}[X]{Remark}
\theoremstyle{nonumberplain}
\newtheorem{proof}{Proof}
\begin{document}

\title{Semisimple Algebraic Groups in Characteristic Zero
\footnote{\copyright 2007 J.S. Milne}}
\author{J.S. Milne}
\date{May 9, 2007}
\maketitle

\begin{abstract}
It is shown that the classification theorems for semisimple algebraic groups
in characteristic zero can be derived quite simply and naturally from the
corresponding theorems for Lie algebras by using a little of the theory of
tensor categories. This article is extracted from \cite{milneSSS}.

\end{abstract}

\section*{Introduction}

The classical approach to classifying the semisimple algebraic groups over
$\mathbb{C}{}$ (see \cite{borel1975}, \S 1) is to:

\begin{itemize}
\item classify the complex semisimple Lie algebras in terms of reduced root
systems (Killing, E.~Cartan, et al.);

\item classify the complex semisimple Lie groups with a fixed Lie algebra in
terms of certain lattices attached to the root system of the Lie algebra
(Weyl, E.~Cartan, et al.);

\item show that a complex semisimple Lie group has a unique structure of an
algebraic group compatible with its complex structure.
\end{itemize}

\noindent Chevalley (1956-58\nocite{chevalley1956-58},
1960-61\nocite{chevalley1960-61}) proved that the classification one obtains
is valid in all characteristics, but his proof is long and
complicated.\footnote{See \cite{humphreys1975}, Chapter XI, and
\cite{springer1998}, Chapters 10 \& 11. Despite its fundamental importance,
many books on algebraic groups, e.g., \cite{borel1991}, don't prove the
classification, and some, e.g., \cite{tauvelY2005}, don't even state it.}

Here I show that the classification theorems for semisimple algebraic groups
in characteristic zero can be derived quite simply and naturally from the
corresponding theorems for Lie algebras by using a little of the theory of
tensor categories. In passing, one also obtains a classification of their
finite-dimensional representations. Beyond its simplicity, the advantage of
this approach is that it makes clear the relation between semisimple Lie
algebras, semisimple algebraic groups, and tensor categories in characteristic zero.

The idea of obtaining an algebraic proof of the classification theorems for
semisimple algebraic groups in characteristic zero by exploiting their
representations is not new --- in a somewhat primitive form it can be found
already in Cartier's announcement (1956)\nocite{cartier1956} --- but I have
not seen an exposition of it in the literature.

Throughout, $k$ is a field of characteristic zero and \textquotedblleft
representation\textquotedblright\ of a Lie algebra or affine group means
\textquotedblleft finite-dimensional linear representation\textquotedblright.

I assume that the reader is familiar with the elementary parts of the theories
of algebraic groups and tensor categories and with the classification of
semisimple Lie algebras; see \cite{milneSSS} for a more detailed account.

\section{Elementary Tannaka duality}

\begin{E}
\label{s25}Let $G$ be an algebraic group, and let $R$ be a $k$-algebra.
Suppose that for each representation $(V,r_{V})$ of $G$ on a
finite-dimensional $k$-vector space $V$, we have an $R$-linear endomorphism
$\lambda_{V}$ of $V(R)$. If the family $(\lambda_{V})$ satisfies the conditions,

\begin{itemize}
\item $\lambda_{V\otimes W}=\lambda_{V}\otimes\lambda_{W}$ for all
representations $V,W$,

\item $\lambda_{\1}=\id_{\1}$ (here $\1=k$ with the trivial action),

\item $\lambda_{W}\circ\alpha_{R}=\alpha_{R}\circ\lambda_{V}$, for all
$G$-equivariant maps $\alpha\colon V\rightarrow W,$\noindent
\end{itemize}

\noindent then there exists a $g\in G(R)$ such that $\lambda_{V}=r_{V}(g)$ for
all $X$ (\cite{deligneM1982}, 2.8.
\end{E}

\noindent Because $G$ admits a faithful finite-dimensional representation, $g$ is uniquely determined by the family $(\lambda_{V})$, and so
the map sending $g\in G(R)$ to the family $(r_{V}(g))$ is a bijection from
$G(R)$ onto the set of families satisfying the conditions in the theorem.
Therefore we can recover $G$ from the category $\Rep(G)$ of representations of
$G$ on finite-dimensional $k$-vector spaces.\medskip

\begin{plain}
Let $G$ be an algebraic group over $k$. For each $k$-algebra $R$, let
$G^{\prime}(R)$ be the set of families $(\lambda_{V})$ satisfying the
conditions in (\ref{s25}). Then $G^{\prime}$ is a functor from $k$-algebras to
groups, and there is a natural map $G\rightarrow G^{\prime}$. That this map is
an isomorphism is often paraphrased by saying that \emph{Tannaka duality holds
for} $G$.
\end{plain}

\section{Gradations on tensor categories}

\begin{plain}
\label{s54}Let $M$ be a finitely generated abelian group, and let $D(M)$ be
the associated diagonalizable algebraic group. An $M$-\emph{gradation} on an
object $X$ of an abelian category is a family of subobjects $(X^{m})_{m\in M}$
such that $X=\bigoplus_{m\in M}X^{m}$. An $M$-\emph{gradation} on a tensor
category $\mathsf{C}$ is an $M$-gradation on each object $X$ of $\mathsf{C}$
compatible with all arrows in $\mathsf{C}$ and with tensor products in the
sense that $(X\otimes Y)^{m}=\bigoplus_{r+s=m}X^{r}\otimes X^{s}$. Let
$(\mathsf{C},\omega)$ be a neutral tannakian category, and let $G\ $be its
Tannaka dual. To give an $M$-gradation on $\mathsf{C}$ is the same as to give
a central homomorphism $D(M)\rightarrow G$: a homomorphism corresponds to the
$M$-gradation such that $X^{m}$ is the subobject of $X$ on which $D(M)$ acts
through the character $m$ (\cite{saavedra1972}; \cite{deligneM1982}, \S 5).
\end{plain}

\begin{plain}
\label{s55}Let $\mathsf{C}$ be a semsimple $k$-linear tensor category such
that $\End(X)=k$ for every simple object $X$ in $\mathsf{C}$, and let
$I(\mathsf{C})$ be the set of isomorphism classes of simple objects in
$\mathsf{C}$. For elements $x,x_{1},\ldots,x_{m}$ of $I(\mathsf{C})$
represented by simple objects $X,X_{1},\ldots,X_{m}$, write $x\prec
x_{1}\otimes\cdots\otimes x_{m}$ if $X\ $is a direct factor of $X_{1}%
\otimes\cdots\otimes X_{m}$. The following statements are obvious.

\begin{enumerate}
\item Let $M$ be a commutative group. To give an $M$-gradation on $\mathsf{C}$
is the same as to give a map $f\colon I(\mathsf{C})\rightarrow M$ such that
\[
x\prec x_{1}\otimes x_{2}\implies f(x)=f(x_{1})+f(x_{2}).
\]
A map from $I(\mathsf{C})$ to a commutative group satisfying this condition
will be called a \emph{tensor map}. For such a map, $f(\1)=0$, and if $X$ has
dual $X^{\vee}$, then $f([X^{\vee}])=-f([X])$.

\item Let $M(\mathsf{C})$ be the free abelian group with generators the
elements of $I(\mathsf{C})$ modulo the relations: $x=x_{1}+x_{2}$ if $x\prec
x_{1}\otimes x_{2}$. The obvious map $I(\mathsf{C})\rightarrow M(\mathsf{C})$
is a universal tensor map, i.e., it is a tensor map, and every other tensor
map $I(\mathsf{C})\rightarrow M$ factors uniquely through it. Note that
$I(\mathsf{C})\rightarrow M(\mathsf{C})$ is surjective.
\end{enumerate}
\end{plain}

\begin{plain}
\label{s56}Let $(\mathsf{C},\omega)$ be a neutral tannakian category such that
$\mathsf{C}$ is semisimple and $\End(V)=k$ for every simple object in
$\mathsf{C}$. Let $Z$ be the centre of $G\overset{\text{{\tiny def}}}%
{=}\underline{\Aut}^{\otimes}(\omega)$. Because $\mathsf{C}$ is semisimple,
$G$ is reductive, and so $Z$ is of multiplicative type. Assume (for
simplicity) that $Z$ is split, so that $Z=D(N)$ with $N$ the group of
characters of $Z$. According to (\ref{s54}), to give an $M$-gradation on
$\mathsf{C}$ is the same as to give a homomorphism $D(M)\rightarrow Z$, or,
equivalently, a homomorphism $N\rightarrow M$. On the other hand, (\ref{s55})
shows that to give an $M$-gradation on $\mathsf{C}$ is the same as to give a
homomorphism $M(\mathsf{C})\rightarrow M$. Therefore $M(\mathsf{C})\simeq N$.
In more detail: let $X$ be an object of $\mathsf{C}$; if $X$ is simple, then
$Z$ acts on $X$ through a character $n$ of $Z$, and the tensor map $[X]\mapsto
n\colon$ $I(\mathsf{C})\rightarrow N$ is universal.
\end{plain}

\begin{plain}
\label{s56a}Let $(\mathsf{C},\omega)$ be as in (\ref{s56}), and define an
equivalence relation on $I(\mathsf{C})$ by%
\[
a\sim a^{\prime}\iff\text{there exist }x_{1},\ldots,x_{m}\in I(\mathsf{C}%
)\text{ such that }a,a^{\prime}\prec x_{1}\otimes\cdots\otimes x_{m}.
\]
A function $f$ from $I(\mathsf{C})$ to a commutative group defines a gradation
on $\mathsf{C}$ if and only if $f(a)=f(a^{\prime})$ whenever $a\sim a^{\prime
}$. Therefore, $M(\mathsf{C})\simeq I(\mathsf{C})/\!\!\sim$ .
\end{plain}

\section{Representations of split semisimple Lie algebras}

Throughout this subsection, $(\mathfrak{g},\mathfrak{h}{})$ is a split
semisimple Lie algebra with root system $R\subset\mathfrak{h}{}^{\vee}$, and
$\mathfrak{b}{}$ is the Borel subalgebra of $(\mathfrak{g}{},\mathfrak{h}{})$
attached to a base $S$ for $R$. According to a theorem of Weyl, the
representations of $\mathfrak{g}{}$ are semisimple, and so to classify them it
suffices to classify the simple representations.

\begin{E}
\label{s103}Let $r\colon\mathfrak{g}\rightarrow\mathfrak{g}\mathfrak{l}_{V}$
be a simple representation of $\mathfrak{g}{}$.

\begin{enumerate}
\item There exists a unique one-dimensional subspace $L$ of $V$ stabilized by
$\mathfrak{b}{}$.

\item The $L$ in (a) is a weight space for $\mathfrak{h}{}$, i.e.,
$L=V_{\varpi_{V}}$ for some $\varpi_{V}\in\mathfrak{h}{}^{\vee}$.

\item The $\varpi_{V}$ in (b) is dominant, i.e., $\varpi_{V}\in P_{++}$;

\item If $\varpi$ is also a weight for $\mathfrak{h}{}$ in $V$, then
$\varpi=\varpi_{V}-\sum_{\alpha\in S}m_{\alpha}\alpha$ with $m_{\alpha}%
\in\mathbb{N}{}$.
\end{enumerate}
\end{E}

\noindent The Lie-Kolchin theorem shows that there does exist a
one-dimensional eigenspace for $\mathfrak{b}{}$ --- the content of (a) is that
when $V$ is simple (as a representation of $\mathfrak{g}{}$), the space is
unique. Since $L$ is mapped into itself by $\mathfrak{b}{}$, it is also mapped
into itself by $\mathfrak{h}{}$, and so lies in a weight space. The content of
(b) is that it is the whole weight space. For the proof, see
\cite{bourbakiLie}, VIII, \S 7.

Because of (d), $\varpi_{V}$ is called the \emph{heighest weight} of the
simple representation $(V,r)$.

\begin{E}
\label{s104}Every dominant weight occurs as the highest weight of a simple
representation of $\mathfrak{g}{}$ (ibid.).
\end{E}

\begin{E}
\noindent\label{s104a}Two simple representations of $\mathfrak{g}{}$ are
isomorphic if and only if their highest weights are equal.
\end{E}

\noindent Thus $(V,r)\mapsto\varpi_{V}$ defines a bijection from the set of
isomorphism classes of simple representations of $\mathfrak{g}{}$ onto the set
of dominant weights $P_{++}$.

\begin{E}
\label{s105}If $(V,r)$ is a simple representation of $\mathfrak{g}{}$, then
$\End(V,r)\simeq k$.
\end{E}

\noindent To see this, let $V=V_{\varpi}$ with $\varpi$ dominant. Every
isomorphism $V_{\varpi}\rightarrow V_{\varpi}$ maps the highest weight line
$L$ into itself, and is determined by its restriction to $L$ because $L$
generates $V_{\varpi}$ as a $\mathfrak{g}{}$-module.

\begin{plain}
\label{s106}The category $\Rep(\mathfrak{g}{})$ of representations of
$\mathfrak{g}$ is a semisimple $k$-linear category to which we can apply
(\ref{s55}). Statements (\ref{s104}, \ref{s104a}) allow us to identify the set
of isomorphism classes of $\Rep(\mathfrak{g}{})$ with $P_{++}$. Let
$M(P_{++})$ be the free abelian group with generators the elements of $P_{++}$
and relations%
\[
\varpi=\varpi_{1}+\varpi_{1}\text{ if }V_{\varpi}\subset V_{\varpi_{1}}\otimes
V_{\varpi_{2}}.
\]
Then $P_{++}\rightarrow M(P_{++})$ is surjective, and two elements $\varpi$
and $\varpi^{\prime}$ of $P_{++}$ have the same image in $M(P_{++})$ if and
only there exist $\varpi_{1},\ldots,\varpi_{m}\in P_{++}$ such that
$W_{\varpi}$ and $W_{\varpi^{\prime}}$ are subrepresentations of
$W_{\varpi_{1}}\otimes\cdots\otimes W_{\varpi_{m}}$ (see \ref{s56a}). Later we
shall prove that this condition is equivalent to $\varpi-\varpi^{\prime}\in
Q$, and so $M(P_{++})\simeq P/Q$. In other words, $\Rep(\mathfrak{g}{})$ has a
gradation by $P_{++}/Q\cap P_{++}\simeq P/Q$ but not by any larger quotient.

For example, let $\mathfrak{g}{}=\mathfrak{sl}_{2}$, so that $Q=\mathbb{Z}%
{}\alpha$ and $P=\mathbb{Z}{}\frac{\alpha}{2}$. For $n\in\mathbb{N}{}$, let
$V(n)$ be a simple representation of $\mathfrak{g}{}$ with heighest weight
$\frac{n}{2}\alpha$. From the Clebsch-Gordon formula (\cite{bourbakiLie},
VIII, \S 9), namely,%
\[
V(m)\otimes V(n)\approx V(m+n)\oplus V(m+n-2)\oplus\cdots\oplus V(m-n),\quad
n\leq m,
\]
we see that $\Rep(\mathfrak{g}{})$ has a natural $P/Q$-gradation (but not a
gradation by any larger quotient of $P$).
\end{plain}

\begin{exercise}
\label{s107}\footnote{Not done by the author.}Prove that the kernel of
$P_{++}\rightarrow M(P_{++})$ is $Q\cap P_{++}$ by using the formulas for the
characters and multiplicities of the tensor products of simple representations
(cf. \cite{humphreys1972}, \S 24, especially Exercise 12).
\end{exercise}

\section{Basic theory of semisimple algebraic groups}

\begin{proposition}
\label{s110}A connected algebraic group $G$ is semisimple (resp. reductive) if
and only if its Lie algebra is semisimple (resp. reductive).
\end{proposition}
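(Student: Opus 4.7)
The strategy is to reduce each equivalence in the proposition to an identity between the Lie algebra of a group-theoretic radical and the corresponding radical of the Lie algebra. Specifically, writing $\mathfrak{g} = \Lie(G)$, $R(G)$ for the radical of $G$ (its largest connected normal solvable subgroup) and $R_{u}(G)$ for its unipotent radical, I would prove
\[
\Lie(R(G)) = \mathrm{rad}(\mathfrak{g}), \qquad \Lie(R_{u}(G)) = \mathfrak{n}(\mathfrak{g}),
\]
where $\mathfrak{n}(\mathfrak{g}) \subseteq \mathrm{rad}(\mathfrak{g})$ is the nilpotent radical of $\mathfrak{g}$. In characteristic zero, a connected algebraic subgroup is trivial iff its Lie algebra is zero, and $G$ (resp.\ $\mathfrak{g}$) is semisimple iff $R(G)=1$ (resp.\ $\mathrm{rad}(\mathfrak{g})=0$), reductive iff $R_{u}(G)=1$ (resp.\ $\mathfrak{n}(\mathfrak{g})=0$), so both equivalences follow at once from the two displayed identities.

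The elementary tools I would invoke, all standard in characteristic zero, are: (i) the Lie functor is faithful and exact on connected algebraic subgroups of $G$, and a connected subgroup $H \subseteq G$ is normal iff $\Lie(H)$ is an ideal of $\mathfrak{g}$; (ii) a connected algebraic group is solvable (resp.\ unipotent) iff its Lie algebra is solvable (resp.\ consists of nilpotent endomorphisms, hence in particular is nilpotent). One inclusion in each of the identities is then immediate: $\Lie(R(G))$ is a solvable ideal in $\mathfrak{g}$, so $\Lie(R(G)) \subseteq \mathrm{rad}(\mathfrak{g})$, and similarly $\Lie(R_{u}(G)) \subseteq \mathfrak{n}(\mathfrak{g})$. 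For the reverse inclusion I would, given a solvable ideal $\mathfrak{a} \subseteq \mathfrak{g}$, pass to its algebraic envelope $\mathfrak{a}^{\mathrm{alg}}$ inside $\mathfrak{g}$; this is still a solvable ideal and, by construction, is the Lie algebra of a connected subgroup $H \subseteq G$. Because $\Lie(H)$ is an ideal, $H$ is normal; because it is solvable, $H \subseteq R(G)$, whence $\mathfrak{a} \subseteq \Lie(R(G))$. The reductive case is identical with "solvable" replaced by "unipotent" and the algebraic envelope of a nilpotent ideal of ad-nilpotent elements used in place of the solvable one.

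The main obstacle is precisely this algebraicity step: an arbitrary Lie subalgebra of $\mathfrak{g}$ need not be algebraic, so the argument depends on knowing that the algebraic envelope of a solvable (resp.\ unipotent) ideal is again a solvable (resp.\ unipotent) ideal. Both are classical results of Chevalley, proved by realising the envelope as the Zariski closure in $G$ of a suitable integral subgroup and checking that the closure of a connected solvable (resp.\ unipotent) subgroup retains the same property and remains normal if the original subalgebra was an ideal. Once these facts are quoted, nothing beyond the dictionary between connected algebraic groups in characteristic zero and their Lie algebras is required; in particular, the proposition makes no use of the classification theory developed later in the paper.
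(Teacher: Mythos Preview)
Your argument is correct (with one caveat below), but it is organised inversely to the paper's and uses heavier input. You first establish $\Lie(R(G))=\mathrm{rad}(\mathfrak g)$ via Chevalley's theorem on algebraic envelopes, and then read off the proposition; the paper does the opposite, proving the proposition directly and deducing the radical identity afterwards as Corollary~\ref{s111}. The key technical difference is how each proof produces, from an ideal of $\mathfrak g$, an algebraic subgroup of $G$. You start from an arbitrary solvable ideal $\mathfrak a$ and must pass to its algebraic envelope, invoking Chevalley's nontrivial result that the envelope of a solvable ideal remains a solvable ideal. The paper instead starts from a \textit{commutative} ideal $\mathfrak n$ (enough, since $\mathfrak g$ is semisimple iff it has no nonzero abelian ideal) and looks at the centralizer $H=C_G(\mathfrak n)$, which is automatically an algebraic subgroup; one checks by hand that $\Lie(H)$ is an ideal, so $H^{\circ}\trianglelefteq G$, whence $Z(H^{\circ})$ is a normal commutative subgroup of $G$, hence finite, hence $\mathfrak n\subset z(\Lie H)=\Lie Z(H^{\circ})=0$. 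This centralizer trick bypasses the algebraic-envelope machinery entirely, so the paper's proof is more self-contained; your route is more systematic and yields the radical identity as the primary statement rather than as a corollary.

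One small warning on the reductive case: the characterisation ``$\mathfrak g$ reductive $\Longleftrightarrow$ $\mathfrak n(\mathfrak g)=0$'' fails if $\mathfrak n(\mathfrak g)$ denotes the largest nilpotent ideal (an abelian $\mathfrak g$ is reductive but equals its own nilradical). You need $\mathfrak n(\mathfrak g)$ to mean the largest ideal acting nilpotently in every finite-dimensional representation, equivalently $[\mathfrak g,\mathrm{rad}(\mathfrak g)]$; with that convention your identity $\Lie(R_u(G))=\mathfrak n(\mathfrak g)$ and the deduction go through.
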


\begin{proof}
Suppose that $\Lie(G)$ is semisimple, and let $N$ be a normal commutative
subgroup of $G$. Then $\Lie(N)$ is a commutative ideal in $\Lie(G)$, and so is
zero. This implies that $N\ $is finite.

Conversely, suppose that $G$ is semisimple, and let $\mathfrak{n}{}$ be a
commutative ideal in $\mathfrak{g}{}$. When $G$ acts on $\mathfrak{g}{}$
through the adjoint representation, the Lie algebra of $H\overset
{\text{{\tiny def}}}{=}C_{G}(\mathfrak{n}{})$ is%
\[
\mathfrak{h}{}=\{x\in\mathfrak{g}{}\mid\lbrack x,\mathfrak{n}{}]=0\},
\]
which contains $\mathfrak{n}{}$. Because $\mathfrak{n}{}$ is an ideal, so is
$\mathfrak{h}{}$:%
\[
\lbrack x,n]=0,\quad y\in\mathfrak{g}{}\implies\lbrack\lbrack
y,x],n]=[y,[x,n]]-[x,[y,n]]=0.
\]
Therefore $H^{\circ}$ is normal in $G$, which implies that its centre
$Z(H^{\circ})$ is normal in $G$. Because $G$ is semisimple, $Z(H^{\circ})$ is
finite, and so $z(\mathfrak{h}{})=0$. But $z(\mathfrak{h}{})\supset
\mathfrak{n}{}$, and so $\mathfrak{n}{}=0$.

The reductive case is similar.
\end{proof}

\begin{corollary}
\label{s111}The Lie algebra of the radical of a connected algebraic group $G$
is the radical of the Lie algebra of $\mathfrak{g}{}$; in other words,
$\Lie(R(G))=r(\Lie(G))$.
\end{corollary}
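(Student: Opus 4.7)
The plan is to exhibit $\Lie(R(G))$ as a solvable ideal of $\Lie(G)$ whose quotient is semisimple; since $r(\Lie(G))$ is uniquely characterized by these two properties, the equality $\Lie(R(G))=r(\Lie(G))$ will follow immediately. The only nontrivial ingredients beyond Proposition \ref{s110} are standard compatibilities between $\Lie(-)$ and quotients in characteristic zero.

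First I would observe that $R(G)$ is a normal connected algebraic subgroup of $G$, so $\Lie(R(G))$ is an ideal of $\Lie(G)$. Because $R(G)$ is solvable as an algebraic group, its derived series is a finite descending chain of algebraic subgroups terminating in the trivial group; passing to Lie algebras (using that $\Lie$ of a commutator subgroup contains the commutator subalgebra, and equals it in characteristic zero) shows that the derived series of $\Lie(R(G))$ terminates, so this Lie algebra is solvable.

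Next I would use that $G/R(G)$ is connected semisimple by definition of the radical, and that in characteristic zero the quotient map $G\to G/R(G)$ is smooth, so the induced sequence of Lie algebras is short exact:
\[
\Lie(G)/\Lie(R(G))\simeq\Lie(G/R(G)).
\]
By Proposition \ref{s110}, the right-hand side is a semisimple Lie algebra.

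Finally, I would invoke the standard characterization of the radical of a finite-dimensional Lie algebra: if $\mathfrak{a}\subseteq\mathfrak{g}$ is a solvable ideal such that $\mathfrak{g}/\mathfrak{a}$ is semisimple, then $\mathfrak{a}=r(\mathfrak{g})$. Indeed, $r(\mathfrak{g})\supseteq\mathfrak{a}$ by maximality of the radical among solvable ideals, and $r(\mathfrak{g})/\mathfrak{a}$ would otherwise be a nonzero solvable ideal in the semisimple quotient, a contradiction. Applied to $\mathfrak{a}=\Lie(R(G))$, this gives the corollary. The main obstacle is the justification that $\Lie$ is compatible with the radical formation, i.e. that $\Lie$ of a solvable algebraic group is solvable and that it is exact on the quotient $G\twoheadrightarrow G/R(G)$; both are standard in characteristic zero but deserve to be cited explicitly.
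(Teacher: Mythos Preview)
Your argument is correct and is essentially the same as the paper's: both use exactness of $\Lie$ on the sequence $1\to R(G)\to G\to G/R(G)\to 1$, note that $\Lie(R(G))$ is solvable and that $\Lie(G/R(G))$ is semisimple by Proposition~\ref{s110}, and then conclude by the characterization of the radical as the unique solvable ideal with semisimple quotient. The only difference is that you spell out the final step explicitly, whereas the paper states it in one line.
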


\begin{proof}
Because $\Lie$ is an exact functor, the exact sequence%
\[
1\rightarrow RG\rightarrow G\rightarrow G/RG\rightarrow1
\]
gives rise to an exact sequence%
\[
0\rightarrow\Lie(RG)\rightarrow\mathfrak{g}{}\rightarrow\Lie(G/RG)\rightarrow
0
\]
in which $\Lie(RG)$ is solvable (obviously) and $\Lie(G/RG)$ is semisimple.
The image in $\Lie(G/RG)$ of any solvable ideal in $\mathfrak{g}{}$ is zero,
and so $\Lie(RG)$ is the largest solvable ideal in $\mathfrak{g}{}$.
\end{proof}

A connected algebraic group $G$ is \emph{simple} if it is noncommutative and
has no proper normal algebraic subgroups $\neq1$, and it is \emph{almost
simple} if it is noncommutative and has no proper normal algebraic subgroups
except for finite subgroups. An algebraic group $G$ is said to be the
\emph{almost-direct product} of its algebraic subgroups $G_{1},\ldots,G_{n}$
if the map%
\[
(g_{1},\ldots,g_{n})\mapsto g_{1}\cdots g_{n}\colon G_{1}\times\cdots\times
G_{n}\rightarrow G
\]
is a surjective homomorphism with finite kernel; in particular, this means
that the $G_{i}$ commute with each other and each $G_{i}$ is normal in $G$.

\begin{theorem}
\label{s112}Every connected semisimple algebraic group $G$ is an almost-direct
product
\[
G_{1}\times\cdots\times G_{r}\rightarrow G
\]
of its minimal connected normal algebraic subgroups. In particular, there are
only finitely many such subgroups. Every connected normal algebraic subgroup
of $G$ is a product of those $G_{i}$ that it contains, and is centralized by
the remaining ones.
\end{theorem}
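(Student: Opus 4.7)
The plan is to lift the standard decomposition of the semisimple Lie algebra $\mathfrak{g}=\Lie(G)$ into simple ideals up to the group level, using the Lie correspondence between connected algebraic subgroups and Lie subalgebras that is available in characteristic zero.

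First, by Proposition \ref{s110}, $\mathfrak{g}$ is semisimple, so it decomposes as $\mathfrak{g}=\mathfrak{g}_1\oplus\cdots\oplus\mathfrak{g}_r$, where the $\mathfrak{g}_i$ are its minimal nonzero ideals (each simple with trivial centre); moreover, every ideal of $\mathfrak{g}$ is the direct sum of some subset of the $\mathfrak{g}_i$. For each $i$, set $\mathfrak{h}_i=\bigoplus_{j\neq i}\mathfrak{g}_j$. This is a characteristic ideal of $\mathfrak{g}$, hence stable under $\Ad(G)$. Define $G_i:=C_G(\mathfrak{h}_i)^\circ$. Arguing exactly as in the proof of \ref{s110}, $\Lie C_G(\mathfrak{h}_i)=\{x\in\mathfrak{g}\mid [x,\mathfrak{h}_i]=0\}$, which equals $\mathfrak{g}_i$ because each $\mathfrak{g}_j$ has trivial centre. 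Since $\mathfrak{h}_i$ is $\Ad(G)$-stable, $C_G(\mathfrak{h}_i)$ is normal in $G$, hence so is $G_i$.

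Next I show the $G_i$ commute pairwise and that the multiplication map $\mu\colon G_1\times\cdots\times G_r\to G$ is surjective with finite kernel. For $i\neq j$, the group $G_i$ centralizes $\mathfrak{h}_i\supset\mathfrak{g}_j=\Lie G_j$, and in characteristic zero the centralizer of a connected subgroup in a connected group coincides with the centralizer of its Lie algebra, so $G_i$ and $G_j$ commute. Hence $\mu$ is a group homomorphism, and its differential at $1$ is the sum map $(x_1,\dots,x_r)\mapsto\sum x_i\colon \mathfrak{g}_1\oplus\cdots\oplus\mathfrak{g}_r\to\mathfrak{g}$, which is an isomorphism by the direct-sum decomposition. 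Therefore $\mu$ is \'etale at the identity: its image is a connected subgroup of $G$ whose Lie algebra is all of $\mathfrak{g}$, hence equals $G$, and its kernel is discrete, hence finite.

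Finally, let $H$ be a connected normal algebraic subgroup of $G$. Then $\Lie H$ is an ideal of $\mathfrak{g}$, so $\Lie H=\bigoplus_{i\in I}\mathfrak{g}_i$ for some $I\subset\{1,\dots,r\}$. For $i\in I$, the identity component of $H\cap G_i$ is a connected subgroup of $G_i$ with Lie algebra $\mathfrak{g}_i=\Lie G_i$, so it equals $G_i$; thus $G_i\subset H$. The product subgroup $\prod_{i\in I}G_i\subset H$ is connected with the same Lie algebra as $H$, hence equals $H$; for $j\notin I$, the group $G_j$ centralizes $\Lie H$, hence centralizes $H$. In particular, each $G_i$ has no proper connected normal subgroup of $G$ inside it (since $\mathfrak{g}_i$ has no proper ideals), so the $G_i$ are precisely the minimal connected normal subgroups. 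The main obstacle is to invoke the Lie correspondence correctly in characteristic zero at three points: that a connected algebraic subgroup of $G$ is determined by its Lie algebra, that Lie-algebra commutation lifts to group-level commutation of connected subgroups, and that a homomorphism of algebraic groups whose differential at the identity is surjective has image $G$ and finite kernel.
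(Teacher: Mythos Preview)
Your proof is correct and follows essentially the same route as the paper: define $G_i$ as the identity component of the centralizer of the complementary ideal $\bigoplus_{j\neq i}\mathfrak{g}_j$, use the Lie correspondence in characteristic zero to identify $\Lie(G_i)=\mathfrak{g}_i$, deduce commutation from $[\mathfrak{g}_i,\mathfrak{g}_j]=0$, and conclude by comparing Lie algebras. Your version is in fact slightly more explicit than the paper's in a few places (e.g.\ you justify normality of $G_i$ via $\Ad(G)$-stability of $\mathfrak{h}_i$, and you argue finiteness of the kernel via the differential of $\mu$ rather than via the intersection of the $G_i$).
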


\begin{proof}
Because $\Lie(G)$ is semisimple, it is a direct sum of its simple ideals:%
\[
\Lie(G)=\mathfrak{g}{}_{1}\oplus\cdots\oplus\mathfrak{g}{}_{r}.
\]
Let $G_{1}$ be the identity component of $C_{G}(\mathfrak{g}{}_{2}\oplus
\cdots\oplus\mathfrak{g}{}_{r})$. Then
\[
\Lie(G_{1})=c_{\mathfrak{g}{}}(\mathfrak{g}{}_{2}\oplus\cdots\oplus
\mathfrak{g}{}_{r})=\mathfrak{g}{}_{1},
\]
which is an ideal in $\Lie(G)$, and so $G_{1}$ is normal in $G$. If $G_{1}$
had a proper normal nonfinite algebraic subgroup, then $\mathfrak{g}{}_{1}%
$would have an ideal other than $\mathfrak{g}{}_{1}$ and $0$, contradicting
its simplicity. Therefore $G_{1}$ is almost-simple. Construct $G_{2}%
,\ldots,G_{r}$ similarly. Because $[\mathfrak{g}{}_{i},\mathfrak{g}{}_{j}]=0$,
the groups $G_{i}$ and $G_{j}$ commute. The subgroup $G_{1}\cdots G_{r}$ of
$G$ has Lie algebra $\mathfrak{g}{}$, and so equals $G$. Finally,
\[
\Lie(G_{1}\cap\ldots\cap G_{r})=\mathfrak{g}{}_{1}\cap\ldots\cap\mathfrak{g}%
{}_{r}=0
\]
and so $G_{1}\cap\ldots\cap G_{r}$ is finite.

Let $H$ be a connected algebraic subgroup of $G$. If $H$ is normal, then $\Lie
H$ is an ideal, and so it is a direct sum of those $\mathfrak{g}{}_{i}$ it
contains and centralizes the remainder. This implies that $H$ is a product of
those $G_{i}$ it contains, and centralizes the remainder.
\end{proof}

\begin{corollary}
\label{s113} An algebraic group is semisimple if and only if it is an almost
direct product of almost-simple algebraic groups.
\end{corollary}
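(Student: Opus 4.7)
The plan is to deduce this corollary directly from the two main previous results: Theorem \ref{s112} (for ``only if'') and Proposition \ref{s110} (for ``if''), with a short bridging argument showing that almost-simplicity at the group level forces simplicity at the Lie-algebra level.

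For the ``only if'' direction there is essentially nothing to do: Theorem \ref{s112} already asserts that a connected semisimple $G$ is the almost-direct product $G_{1}\cdots G_{r}$ of its minimal connected normal algebraic subgroups, and its proof shows each such $G_{i}$ is almost-simple.

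For the ``if'' direction, suppose $G$ is the almost-direct product of almost-simple algebraic subgroups $G_{1},\ldots,G_{r}$. The multiplication map $G_{1}\times\cdots\times G_{r}\to G$ is surjective with finite kernel, so $G$ is connected, and since $\Lie$ is exact and vanishes on finite groups we obtain $\Lie(G)\cong\Lie(G_{1})\oplus\cdots\oplus\Lie(G_{r})$. By Proposition \ref{s110}, it suffices to prove that this Lie algebra is semisimple, and for that it is enough to show that each $\Lie(G_{i})$ is simple. Granted that, $\Lie(G)$ is a direct sum of simple ideals, hence semisimple, and the corollary follows.

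The only real work is in checking that each almost-simple $G_{i}$ has simple Lie algebra, and this is the step I expect to be the main (though mild) obstacle. First, $G_{i}$ is semisimple: its radical $R(G_{i})$ is a connected normal solvable subgroup, and by almost-simplicity it is either trivial or equal to $G_{i}$; the latter would make $G_{i}$ solvable and noncommutative, so $[G_{i},G_{i}]$ would be a proper nontrivial connected normal subgroup, contradicting almost-simplicity. Hence $R(G_{i})=1$, and by Proposition \ref{s110} $\Lie(G_{i})$ is semisimple. Now apply Theorem \ref{s112} to $G_{i}$ itself: $G_{i}$ decomposes as an almost-direct product of almost-simple connected normal subgroups corresponding to the simple ideals of $\Lie(G_{i})$. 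If there were more than one such ideal, $G_{i}$ would admit a proper nontrivial connected normal subgroup, again contradicting almost-simplicity. Therefore $\Lie(G_{i})$ is simple, completing the argument.
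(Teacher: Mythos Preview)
Your argument is correct; the paper in fact gives no proof for this corollary, treating both directions as immediate from Theorem~\ref{s112} (and the definitions). One small simplification: once you have shown that each $G_i$ is semisimple, Proposition~\ref{s110} gives that each $\Lie(G_i)$ is semisimple, and then $\Lie(G)=\bigoplus_i\Lie(G_i)$ is already semisimple --- so invoking Theorem~\ref{s112} a second time to upgrade ``semisimple'' to ``simple'' for $\Lie(G_i)$ is unnecessary for the corollary as stated.
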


\begin{corollary}
\label{s114}All nontrivial connected normal subgroups and quotients of a
semisimple algebraic group are semisimple.
\end{corollary}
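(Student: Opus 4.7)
The plan is to combine Proposition \ref{s110} with the standard structural fact that ideals and quotients of a semisimple Lie algebra are themselves semisimple.

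For a nontrivial connected normal algebraic subgroup $H \subset G$: since $\Lie$ is an exact functor and $H$ is normal, $\Lie(H)$ is an ideal in the semisimple Lie algebra $\mathfrak{g} = \Lie(G)$. Decomposing $\mathfrak{g} = \mathfrak{g}_{1} \oplus \cdots \oplus \mathfrak{g}_{r}$ into its simple ideals (the same decomposition used in the proof of Theorem \ref{s112}), every ideal of $\mathfrak{g}$ is the direct sum of a subset of the $\mathfrak{g}_{i}$ and is therefore semisimple. Since $H$ is connected with semisimple Lie algebra, Proposition \ref{s110} gives that $H$ is semisimple.

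For a quotient $G/N$ by a closed normal algebraic subgroup $N$: first, $G/N$ is automatically connected, being the image of the connected group $G$. Let $N^{\circ}$ denote the identity component of $N$; it is normal in $G$ and satisfies $\Lie(N^{\circ}) = \Lie(N)$. From the short exact sequence $1 \to N \to G \to G/N \to 1$ and the exactness of $\Lie$, we obtain an isomorphism $\Lie(G/N) \simeq \mathfrak{g}/\Lie(N^{\circ})$. But $\Lie(N^{\circ})$ is an ideal in $\mathfrak{g}$, hence the direct sum of some of the $\mathfrak{g}_{i}$, so the quotient $\mathfrak{g}/\Lie(N^{\circ})$ is the direct sum of the remaining $\mathfrak{g}_{i}$ and is semisimple. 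Proposition \ref{s110} then gives that $G/N$ is semisimple.

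I do not anticipate a serious obstacle. The one mild subtlety is that $N$ need not itself be connected, so to bring the Lie-theoretic argument to bear one passes to the identity component $N^{\circ}$, using that $N^{\circ}$ is normal in $G$ and has the same Lie algebra as $N$. The whole argument is really a direct corollary of the decomposition of $\mathfrak{g}$ into simple ideals that already underlies Theorem \ref{s112}; alternatively, one could give a purely group-theoretic derivation from Theorem \ref{s112} by writing a connected normal $H$ as an almost-direct product of those $G_{i}$ it contains (hence semisimple by Corollary \ref{s113}) and writing $G/N^{\circ}$ as an almost-direct product of the remaining $G_{j}$.
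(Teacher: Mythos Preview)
Your proof is correct. The paper's own proof is a one-liner: it simply observes that such subgroups and quotients are almost-direct products of almost-simple algebraic groups, invoking Theorem \ref{s112} and Corollary \ref{s113} directly.

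Your primary argument takes a slightly different route: rather than citing the group-theoretic decomposition of Theorem \ref{s112}, you drop to the Lie algebra level via Proposition \ref{s110} and use the fact that ideals and quotients of a semisimple Lie algebra are semisimple. This is perfectly valid and in some sense more self-contained, since it bypasses the packaging of \ref{s112}--\ref{s113}; on the other hand, it re-derives at the Lie algebra level exactly the structure that the proof of \ref{s112} has already established, so the paper's proof is the shorter path given what has come before. Your closing ``alternatively'' paragraph is in fact precisely the paper's argument. One small simplification: for the quotient case you do not need to pass to $N^{\circ}$ at all, since $N$ normal in $G$ already forces $\Lie(N)$ to be an ideal in $\mathfrak{g}$, and $\Lie(N)=\Lie(N^{\circ})$ anyway.
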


\begin{proof}
They are almost-direct products of almost-simple algebraic groups.
\end{proof}

\begin{corollary}
\label{s115} A semisimple group has no commutative quotients $\neq1$.
\end{corollary}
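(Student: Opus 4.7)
The plan is to invoke Corollary \ref{s114} to reduce the statement to the elementary Lie-algebra fact that a commutative semisimple Lie algebra is zero. Let $q\colon G\twoheadrightarrow H$ be a surjective homomorphism of algebraic groups with $H$ commutative; I want to show $H=1$.

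Since $G$ is (connected) semisimple, the image $H$ is connected. Suppose for contradiction that $H\neq 1$. Then by Corollary \ref{s114} the nontrivial connected quotient $H$ is itself semisimple, so by Proposition \ref{s110} the Lie algebra $\Lie(H)$ is semisimple. But $H$ is commutative, so $\Lie(H)$ is commutative as well, and the only Lie algebra which is simultaneously commutative and semisimple is $0$. Hence $\Lie(H)=0$, and since we are in characteristic zero and $H$ is connected, this forces $H=1$, contradicting our assumption.

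There is essentially no obstacle: the whole argument is a one-line combination of the Lie-algebra dictionary (Proposition \ref{s110}, Corollary \ref{s114}) with the defining property of semisimplicity at the Lie-algebra level. If one preferred to avoid appealing to \ref{s114}, an alternative route via Theorem \ref{s112} is available: write $G$ as the almost-direct product $G_1\cdots G_r$ of its almost-simple factors and observe that the restriction of $q$ to each $G_i$ has commutative image, so its kernel is a normal subgroup containing $[G_i,G_i]$; since $G_i$ is noncommutative and almost-simple, this kernel cannot be finite, so it must equal $G_i$, and thus $q$ is trivial on each factor and on $G$. The first approach is cleaner and is the one I would present.
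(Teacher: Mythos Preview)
Your proof is correct. The paper's own proof takes exactly your \textit{alternative} route: it says the statement is obvious for simple groups and then invokes Theorem~\ref{s112} to pass to the general semisimple case. Your primary approach is a genuine (if short) detour: you quote Corollary~\ref{s114} to conclude that a nontrivial quotient $H$ is again semisimple, then apply Proposition~\ref{s110} to translate this into the Lie-algebra statement that $\Lie(H)$ is both semisimple and commutative, hence zero. Since Corollary~\ref{s114} is itself deduced from Theorem~\ref{s112}, your primary argument ultimately rests on the same structure theorem, with the extra passage through $\Lie$ via Proposition~\ref{s110}; the paper's version avoids that step by dealing with the almost-simple factors directly at the group level. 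Either way the content is essentially the same one-line observation.
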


\begin{proof}
\noindent This is obvious for simple groups, and the theorem then implies it
for semisimple groups.
\end{proof}

\begin{definition}
\label{s115a}A semisimple algebraic group $G$ is said to be \emph{splittable}
if it has a split maximal subtorus. A \emph{split semisimple algebraic group}
is a pair $(G,T)$ consisting of a semisimple algebraic group $G$ and a split
maximal torus $T$.
\end{definition}

\begin{lemma}
\label{s115b}If $T$ is a split torus in $G$, then $\Lie(T)$ is a commutative
subalgebra of $\Lie(G)$ consisting of semisimple elements.
\end{lemma}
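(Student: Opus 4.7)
The plan is to verify the two assertions separately, each following from standard properties of split tori.

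For commutativity, I observe that $T$ is commutative as an algebraic group, so the inner automorphisms of $T$ are trivial and therefore the adjoint representation of $T$ on its own Lie algebra $\Lie(T)$ is trivial. Differentiating it gives that the bracket on $\Lie(T)$ vanishes, so $\Lie(T)$ is abelian as an abstract Lie algebra. Since the closed immersion $T\hookrightarrow G$ induces an injective Lie algebra homomorphism $\Lie(T)\hookrightarrow\Lie(G)$ that preserves brackets, $\Lie(T)$ is a commutative subalgebra of $\Lie(G)$.

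For the semisimplicity of the elements of $\Lie(T)$, I use the criterion that $x\in\Lie(G)$ is semisimple if and only if $(dr)(x)\in\End_{k}(V)$ is diagonalizable for some (equivalently, every) faithful finite-dimensional representation $(V,r)$ of $G$. Choose such a $(V,r)$, and consider its restriction to $T$. Because $T$ is split, $V$ decomposes as a $T$-module into a direct sum of weight spaces
\[
V=\bigoplus_{\chi}V_{\chi},\qquad V_{\chi}=\{v\in V\mid r(t)v=\chi(t)v\text{ for all }t\in T\},
\]
indexed by the characters $\chi\colon T\to\mathbb{G}_{m}$, each $V_{\chi}$ being defined over $k$. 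Differentiating the relation $r(t)v=\chi(t)v$ at the identity shows that for $x\in\Lie(T)$ the operator $(dr)(x)$ acts on $V_{\chi}$ as multiplication by the scalar $(d\chi)(x)\in k$; hence $(dr)(x)$ is diagonal with respect to any basis adapted to the weight decomposition, and so is semisimple.

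The only nonformal ingredient is the weight-space decomposition of a representation of a split torus over $k$, which is precisely what makes the splitness hypothesis essential here (so that all characters, and the corresponding isotypic components, are defined over $k$). Granting that, the rest of the argument is immediate, and I do not anticipate any real obstacle.
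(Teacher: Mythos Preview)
Your proof is correct and follows essentially the same route as the paper's: commutativity of $\Lie(T)$ is immediate from that of $T$, and semisimplicity of each $x\in\Lie(T)$ is obtained by choosing a faithful representation $(V,r_V)$ of $G$, decomposing $V=\bigoplus_{\chi\in X^{\ast}(T)}V_{\chi}$ as a $T$-module, and observing that $(dr_V)(x)$ acts on $V_{\chi}$ as the scalar $d\chi(x)$. Your write-up simply spells out in more detail what the paper compresses into a couple of lines.
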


\begin{proof}
Certainly $\Lie(T)$ is commutative. Let $(V,r_{V})$ be a faithful
representation of $G$. Then $(V,r_{V})$ decomposes into a direct sum
$\bigoplus\nolimits_{\chi\in X^{\ast}(T)}V_{\chi}$, and $\Lie(T)$ acts
(semisimply) on each factor $V_{\chi}$ through the character $d\chi$. As
$(V,dr_{V})$ is faithful, this shows that $\Lie(T)$ consists of semisimple elements.
\end{proof}

\section{Rings of representations of Lie algebras}

Let $\mathfrak{g}{}$ be a Lie algebra over $k$. A \emph{ring of
representations} of $\mathfrak{g}{}$ is a collection of representations of
$\mathfrak{g}{}$ that is closed under the formation of direct sums,
subquotients, tensor products, and duals. An \emph{endomorphism} of such a
ring $\mathcal{R}{}$ is a family
\[
\alpha=(\alpha_{V})_{V\in\mathcal{R}{}},\quad\alpha_{V}\in
\End_{k\text{-linear}}(V),
\]
such that

\begin{itemize}
\item $\alpha_{V\otimes W}=\alpha_{V}\otimes\id_{W}+\id_{V}\otimes\alpha_{W}$
for all $V,W\in\mathcal{R}{}$,

\item $\alpha_{V}=0$ if $\mathfrak{g}{}$ acts trivially on $V$, and

\item for any homomorphism $\beta\colon V\rightarrow W$ of representations in
$\mathcal{R}{}{}$,%
\[
\alpha_{W}\circ\beta=\alpha_{V}\circ\beta.
\]

\end{itemize}

\noindent The set $\mathfrak{g}{}_{R}$ of all endomorphisms of $\mathcal{R}{}$
becomes a Lie algebra over $k$ (possibly infinite dimensional) with the
bracket%
\[
\lbrack\alpha,\beta]_{V}=[\alpha_{V},\beta_{V}].
\]

\begin{example}
[\cite{iwahori1954}]\label{s116}Let $\mathfrak{g}{}=k$ with $k$ algebraically
closed. To give a represention of $\mathfrak{g}{}$ on a vector space $V$ is
the same as to give an endomorphism $\alpha$ of $V$, and so the category of
representations of $\mathfrak{g}{}{}$ is equivalent to the category of pairs
$(k^{n},A),$ $n\in\mathbb{N}{}$, with $A$ an $n\times n$ matrix. It follows
that to give an endomorphism of the ring $\mathcal{R}{}$ of all
representations of $\mathfrak{g}{}$ is the same as to give a map
$A\mapsto\lambda(A)$ sending a square matrix $A$ to a matrix of the same size
and satisfying certain conditions. A pair $(g,c)$ consisting of an additive
homomorphism $g\colon k\rightarrow k$ and an element $c$ of $k$ defines a
$\lambda$ as follows:

\begin{itemize}
\item $\lambda(S)=U\diag(ga_{1},\ldots,ga_{n})U^{-1}$ if $\lambda$ is the
semisimple matrix $U\diag(a_{1},\ldots,a_{n})U^{-1}$;

\item $\lambda(N)=cN$ if $N$ is nilpotent;

\item $\lambda(A)=\lambda(S)+\lambda(N)$ if $A=S+N$ is the decomposition of
$A$ into its commuting semisimple and nilpotent parts.
\end{itemize}

\noindent Moreover, every $\lambda$ arises from a unique pair $(g,c)$. Note
that $\mathfrak{g}{}_{\mathcal{R}{}}$ has infinite dimension.
\end{example}

Let $\mathcal{R}{}$ be a ring of representations of a Lie algebra
$\mathfrak{g}{}$. For any $x\in\mathfrak{g}{}$, $(r_{V}(x))_{V\in\mathcal{R}%
{}}$ is an endomorphism of $\mathcal{R}{}$, and $x\mapsto(r_{V}(x))$ is a
homomorphism of Lie algebras $\mathfrak{g}{}\rightarrow\mathfrak{g}%
{}_{\mathcal{R}{}}$.

\begin{lemma}
\label{s117}If $\mathcal{R}{}$ contains a faithful representation of
$\mathfrak{g}{}$, then $\mathfrak{g}{}\rightarrow\mathfrak{g}{}_{\mathcal{R}%
{}}$ is injective.
\end{lemma}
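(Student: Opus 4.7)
The plan is essentially to unwind the definitions: the map $\mathfrak{g} \to \mathfrak{g}_{\mathcal{R}}$ sends $x$ to the family $(r_V(x))_{V \in \mathcal{R}}$ (one should first observe in passing that this really is an endomorphism of $\mathcal{R}$ in the sense defined above; the Leibniz rule for the action on $V \otimes W$, vanishing on trivial representations, and commutation with $\mathfrak{g}$-equivariant maps are all immediate from the axioms of a representation). Injectivity of the Lie algebra homomorphism $\mathfrak{g} \to \mathfrak{g}_{\mathcal{R}}$ then amounts to showing that if $r_V(x) = 0$ in $\End_k(V)$ for every $V \in \mathcal{R}$, then $x = 0$.

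Assume $(V_0, r_{V_0})$ is a faithful representation contained in $\mathcal{R}$. Here \emph{faithful} means exactly that $r_{V_0} \colon \mathfrak{g} \to \mathfrak{gl}(V_0)$ is injective as a homomorphism of Lie algebras. Specializing the hypothesis $r_V(x) = 0$ for all $V$ to $V = V_0$ yields $r_{V_0}(x) = 0$, and injectivity of $r_{V_0}$ then gives $x = 0$. This is the entire argument.

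There is no real obstacle: the lemma is a direct consequence of the definitions. The only conceptual content is the remark that although $\mathfrak{g}_{\mathcal{R}}$ is a priori an enormous Lie algebra (as Example \ref{s116} shows, it can be of infinite dimension even when $\mathfrak{g}$ is one-dimensional), the presence of a single faithful object in $\mathcal{R}{}$ already suffices to pin $\mathfrak{g}$ down inside it.
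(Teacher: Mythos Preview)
Your argument is correct and is essentially the same as the paper's: the paper observes that for any $V\in\mathcal{R}$ the composite $\mathfrak{g}\to\mathfrak{g}_{\mathcal{R}}\to\mathfrak{gl}(V)$ equals $r_V$, so the first arrow is injective as soon as some $r_V$ is. Your element-wise formulation (``if $r_V(x)=0$ for all $V$, specialize to the faithful $V_0$'') is just the contrapositive of this factorization statement.
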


\begin{proof}
For any representation $(V,r_{V})$ of $\mathfrak{g}{}$, the composite%
\[
\mathfrak{g}{}\xrightarrow{x\mapsto(r_{V}(x))}\mathfrak{g}{}_{\mathcal{R}%
}\xrightarrow{\lambda\mapsto\lambda_{V}}\mathfrak{g}{}\mathfrak{l}{}(V).
\]
is $r_{V}$. Therefore, $\mathfrak{g}{}\rightarrow\mathfrak{g}{}_{\mathcal{R}%
{}}$ is injective if $r_{V}$.
\end{proof}

\begin{proposition}
\label{s118}Let $G$ be an affine group over $k$, and let $\mathcal{R}{}$ be
the ring of representations of $\mathfrak{g}{}$ arising from a representation
of $G$. Then $\mathfrak{g}{}_{\mathcal{R}{}}\simeq\Lie(G)$; in particular,
$\mathfrak{g}{}_{\mathcal{R}{}}$ depends only of $G^{\circ}$.
\end{proposition}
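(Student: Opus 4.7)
The plan is to apply Theorem~\ref{s25} (Tannaka duality) with $R=k[\epsilon]$, the ring of dual numbers ($\epsilon^{2}=0$), using the standard identification
$\Lie(G)=\Ker\bigl(G(k[\epsilon])\to G(k)\bigr)$.
Tannaka duality translates elements of $G(k[\epsilon])$ reducing to $1$ modulo $\epsilon$ into families $(\lambda_{V})$ of the form $\lambda_{V}=\id_{V(k[\epsilon])}+\epsilon\alpha_{V}$; the goal is to show that the three conditions on $(\lambda_{V})$ in (\ref{s25}) match, after this substitution, the definition of an endomorphism of the ring $\mathcal{R}$.

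First I would construct the forward map $\Lie(G)\to\mathfrak{g}_{\mathcal{R}}$ sending $x$ to $(dr_{V}(x))_{V\in\mathcal{R}}$ and check that it lands in $\mathfrak{g}_{\mathcal{R}}$. The Leibniz rule follows by differentiating $r_{V\otimes W}=r_{V}\otimes r_{W}$ at the identity; the vanishing $dr_{V}(x)=0$ on $V$ with trivial $\mathfrak{g}$-action holds tautologically, since the $\mathfrak{g}$-module structure on $V\in\mathcal{R}$ is by definition $dr_{V}$; and functoriality with respect to $\mathfrak{g}$-equivariant $\beta\colon V\to W$ is, again tautologically, the very definition of $\mathfrak{g}$-equivariance. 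The map is a Lie algebra homomorphism because each $dr_{V}$ is. Injectivity follows from Lemma~\ref{s117}: $G$ is affine, hence admits a faithful representation $V_{0}$; in characteristic zero, $\Ker(dr_{V_{0}})=\Lie(\Ker r_{V_{0}})=0$, so $\mathcal{R}$ contains a faithful representation of $\mathfrak{g}$.

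For surjectivity I would reverse the construction via Theorem~\ref{s25}. Given $(\alpha_{V})\in\mathfrak{g}_{\mathcal{R}}$, set $\lambda_{V}:=\id_{V(k[\epsilon])}+\epsilon\alpha_{V}$. Because $\epsilon^{2}=0$, the Leibniz rule translates cleanly to $\lambda_{V\otimes W}=\lambda_{V}\otimes\lambda_{W}$; the tensor condition applied to $\1\otimes\1=\1$ forces $\alpha_{\1}=0$, so $\lambda_{\1}=\id$; and any $G$-equivariant $\beta$ is automatically $\mathfrak{g}$-equivariant, so the hypothesis $\alpha_{W}\beta=\beta\alpha_{V}$ on $(\alpha_V)$ certainly implies the $G$-functoriality demanded by (\ref{s25}). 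That theorem therefore produces a unique $g\in G(k[\epsilon])$ with $r_{V}(g)=\lambda_{V}$ for every $V\in\Rep(G)$; since $\lambda_{V}\equiv\id\pmod{\epsilon}$, this $g$ lies in $\Ker(G(k[\epsilon])\to G(k))=\Lie(G)$ and by construction recovers $(\alpha_{V})$. The parenthetical claim about $G^{\circ}$ is immediate from $\Lie(G)=\Lie(G^{\circ})$, since the tangent space at $1\in G$ only sees the identity component.

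The one subtle point --- and the reason the argument runs in this direction rather than the other --- is the asymmetry between $G$-equivariance and $\mathfrak{g}$-equivariance in the functoriality hypotheses. Theorem~\ref{s25} demands only $G$-functoriality of $(\lambda_{V})$, whereas membership in $\mathfrak{g}_{\mathcal{R}}$ demands the strictly stronger $\mathfrak{g}$-functoriality of $(\alpha_{V})$. This stronger input hypothesis is exactly what lets us verify the weaker hypothesis of (\ref{s25}) when we run the construction backwards, yielding the desired bijection.
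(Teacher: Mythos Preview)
Your argument is correct and is essentially the paper's own proof: both apply (\ref{s25}) with $R=k[\varepsilon]$, write $\lambda_{V}=\id_{V}+\varepsilon\alpha_{V}$, and translate the three conditions there into the three defining conditions for $\mathfrak{g}_{\mathcal{R}}$. The only difference is organizational: the paper asserts a direct bijection (claiming parenthetically that $G$-morphisms $=\mathfrak{g}$-morphisms), whereas you build the map $\Lie(G)\to\mathfrak{g}_{\mathcal{R}}$ explicitly, get injectivity from Lemma~\ref{s117}, and prove surjectivity using only the safe implication ``$G$-equivariant $\Rightarrow$ $\mathfrak{g}$-equivariant''---a slightly more careful treatment of the same idea.
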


\begin{proof}
By definition, $\Lie(G)$ is the kernel of $G(k[\varepsilon])\rightarrow G(k)$.
Therefore, to give an element of $\Lie(G)$ is the same as to give a family of
$k[\varepsilon]$-linear maps
\[
\id_{V}+\alpha_{V}\varepsilon\colon V[\varepsilon]\rightarrow V[\varepsilon]
\]
indexed by $V\in\mathcal{R}{}$ satisfying the three conditions of (\ref{s25}).
The first of these conditions says that%
\[
\id_{V\otimes W}+\alpha_{V\otimes W}\varepsilon=(\id_{V}+\alpha_{V}%
\varepsilon)\otimes(\id_{W}+\alpha_{W}\varepsilon),
\]
i.e., that%
\[
\alpha_{V\otimes W}=\id_{V}\otimes\alpha_{W}+\alpha_{V}\otimes\id_{W}.
\]
The second condition says that%
\[
\alpha_{\1}=0,
\]
and the third says that the $\alpha_{V}$ commute with all $G$-morphisms
($=\mathfrak{g}{}$-morphisms). Therefore, to give such a family is the same as
to give an element $(\alpha_{V})_{V\in\mathcal{R}{}}$ of
$\mathcal{\mathfrak{g}{}}_{\mathcal{R}{}}$.
\end{proof}

\begin{proposition}
\label{s118a}For a ring $\mathcal{R}{}$ of representations of a Lie algebra
$\mathfrak{g}{}$, the following statements are equivalent:

\begin{enumerate}
\item the map $\mathfrak{g}{}\rightarrow\mathfrak{g}{}_{\mathcal{R}{}}$ is an isomorphism;

\item $\mathfrak{g}{}$ is the Lie algebra of an affine group $G$ such that
$G^{\circ}$ is algebraic and $\mathcal{R}{}$ is the ring of all
representations of $\mathfrak{g}{}$ arising from a representation of $G$.
\end{enumerate}
\end{proposition}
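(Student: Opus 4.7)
The equivalence splits into two implications, and I plan to prove the nontrivial direction (a)$\Rightarrow$(b) by Tannakian reconstruction.

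The direction (b)$\Rightarrow$(a) follows directly from Proposition~\ref{s118}. If $\mathfrak{g}=\Lie(G)$ with $G^{\circ}$ algebraic, and $\mathcal{R}$ is the ring of representations of $\mathfrak{g}$ arising from representations of $G$, then Proposition~\ref{s118} supplies a natural isomorphism $\mathfrak{g}_{\mathcal{R}}\simeq\Lie(G)$. A direct unwinding of the formula $\id_{V}+\alpha_{V}\varepsilon$ used in its proof shows that this isomorphism is inverse to the canonical map $\mathfrak{g}\to\mathfrak{g}_{\mathcal{R}}$ defined just before Lemma~\ref{s117}, establishing (a).

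For (a)$\Rightarrow$(b), my plan is as follows. First, I verify that $\mathcal{R}$ together with the forgetful functor $\omega\colon\mathcal{R}\to\Vct_{k}$ is a neutral tannakian category over $k$: closure under subquotients makes it abelian, closure under duals makes it rigid, the inherited symmetric monoidal structure together with closure under tensor products and direct sums provides the tensor structure, and $\omega$ is tautologically a $k$-linear, exact, faithful tensor functor. Setting $G:=\underline{\Aut}^{\otimes}(\omega)$, the Tannakian reconstruction theorem (\cite{deligneM1982}, \S 2) yields an equivalence $\mathcal{R}\simeq\Rep(G)$, so each $V\in\mathcal{R}$ acquires a canonical $G$-action. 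Proposition~\ref{s118} applied to this $G$ and $\mathcal{R}=\Rep(G)$ gives an isomorphism $\Lie(G)\simeq\mathfrak{g}_{\mathcal{R}}$, which composed with the isomorphism $\mathfrak{g}\xrightarrow{\sim}\mathfrak{g}_{\mathcal{R}}$ of hypothesis (a) identifies $\mathfrak{g}$ with $\Lie(G)$. A routine compatibility check --- that differentiating the $G$-action on $V\in\mathcal{R}$ recovers the original $\mathfrak{g}$-action on $V$ --- then confirms that $\mathcal{R}$ is precisely the ring of $\mathfrak{g}$-representations arising from representations of $G$.

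The remaining point, and the main obstacle I anticipate, is the assertion that $G^{\circ}$ is algebraic. The Tannakian group $G$ produced above is a priori only a pro-algebraic affine group scheme, presented as the inverse limit $G=\plim G_{i}$ of its finite-type quotients. However, $\Lie(G)\simeq\mathfrak{g}$ is finite dimensional, and since $\Lie$ commutes with inverse limits we have $\Lie(G)=\plim\Lie(G_{i}^{\circ})$ with surjective transition maps between finite-dimensional vector spaces. Finite-dimensionality of the limit forces the dimensions of $\Lie(G_{i}^{\circ})$ to stabilize, and for $i\geq i_{0}$ large enough the kernel of $G_{i}^{\circ}\to G_{i_{0}}^{\circ}$ is then a connected algebraic group with trivial Lie algebra, hence trivial (since in characteristic zero affine algebraic groups are smooth, so dimension equals that of the Lie algebra). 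Therefore $G^{\circ}=G_{i_{0}}^{\circ}$, which is algebraic.
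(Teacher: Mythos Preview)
Your strategy coincides with the paper's: for (b)$\Rightarrow$(a) you invoke Proposition~\ref{s118}, and for (a)$\Rightarrow$(b) you take $G=\underline{\Aut}^{\otimes}(\omega)$, identify $\Lie(G)\simeq\mathfrak{g}_{\mathcal{R}}\simeq\mathfrak{g}$, and then argue that $G^{\circ}$ is algebraic because $\Lie(G)$ is finite-dimensional. The paper's one-sentence proof simply invokes this last step as a known ``fact''; you try to supply the argument for it. In that sense your proposal is a faithful expansion of the paper's sketch.

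The gap is in that final argument. You assert that once the dimensions $\dim\Lie(G_{i}^{\circ})$ stabilize, the kernel of $G_{i}^{\circ}\to G_{i_{0}}^{\circ}$ is a \textsl{connected} algebraic group with trivial Lie algebra, hence trivial. But the kernel of a surjection of connected algebraic groups need not be connected; in characteristic zero, trivial Lie algebra only forces the kernel to be finite \'etale. Thus the transition maps are merely isogenies, and an inverse limit of connected algebraic groups along nontrivial isogenies need not be of finite type. Concretely, take $\mathfrak{g}=k$ and let $\mathcal{R}$ be the ring of semisimple representations whose eigenvalues lie in $\mathbb{Z}[\tfrac{1}{2}]$: one checks (cf.\ Example~\ref{s116}) that $\mathfrak{g}_{\mathcal{R}}=\Hom_{\mathbb{Z}}(\mathbb{Z}[\tfrac{1}{2}],k)\simeq k$ and that $\mathfrak{g}\to\mathfrak{g}_{\mathcal{R}}$ is an isomorphism, yet the Tannaka dual is $D(\mathbb{Z}[\tfrac{1}{2}])=\plim\mathbb{G}_{m}$ (transition maps $x\mapsto x^{2}$), which is connected with one-dimensional Lie algebra but not algebraic. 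So your argument breaks at exactly the point where the ``fact'' cited in the paper's proof is itself too strong as stated; this is less a flaw peculiar to your write-up than a genuine subtlety in the statement that neither proof fully addresses.
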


\begin{proof}
This is an immediate consequence of (\ref{s118}) and the fact that an affine
group is algebraic if its Lie algebra is finite-dimensional.
\end{proof}

\begin{corollary}
\label{s118d}Let $\mathfrak{g}{}\rightarrow\mathfrak{g}{}\mathfrak{l}{}(V)$ be
a faithful representation of $\mathfrak{g}{}$, and let $\mathcal{R}(V){}$ be
the ring of representations of $\mathfrak{g}{}$ generated by $V$. Then
$\mathfrak{g}{}\rightarrow\mathfrak{g}{}_{\mathcal{R}{}(V)}$ is an isomorphism
if and only if $\mathfrak{g}{}$ is algebraic, i.e., the Lie algebra of an
algebraic subgroup of $\GL_{V}$.
\end{corollary}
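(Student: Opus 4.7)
The plan is to reduce this corollary to Proposition \ref{s118a} by identifying, in each direction, the ring $\mathcal{R}(V)$ with the ring of all representations of $\mathfrak{g}$ arising from a representation of an appropriate algebraic group. Proposition \ref{s118a} then supplies the bridge between $\mathfrak{g} \to \mathfrak{g}_{\mathcal{R}(V)}$ being an isomorphism and the existence of such a group.

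For the forward direction, I assume $\mathfrak{g} \to \mathfrak{g}_{\mathcal{R}(V)}$ is an isomorphism. Applying \ref{s118a}, there is an affine group $G$ with $G^{\circ}$ algebraic, $\Lie(G) = \mathfrak{g}$, and $\mathcal{R}(V)$ is the ring of representations of $\mathfrak{g}$ arising from representations of $G$. In particular, $V$ itself is obtained from a homomorphism $G \to \GL_V$; let $H$ denote its scheme-theoretic image, which is an algebraic subgroup of $\GL_V$. Then $\Lie(H)$ is the image of $\Lie(G) = \mathfrak{g}$ in $\mathfrak{gl}_V$ under the differential, and since the given representation $\mathfrak{g} \to \mathfrak{gl}_V$ is faithful, this differential is injective, giving $\Lie(H) \cong \mathfrak{g}$. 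Hence $\mathfrak{g}$ is algebraic.

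For the converse, suppose $\mathfrak{g} = \Lie(H)$ for some algebraic subgroup $H \subseteq \GL_V$. I want to invoke \ref{s118a} with the group $H$, so I need to identify $\mathcal{R}(V)$ with the ring of representations of $\mathfrak{g}$ arising from $H$-representations. One inclusion is immediate: every representation constructed from $V$ by the operations of direct sum, subquotient, tensor product, and dual is automatically an $H$-representation since $V$ is one. The other inclusion is the standard Tannakian fact that any representation of an affine group is a subquotient of a finite sum of tensor products of copies of $V$ and $V^{\vee}$ — this is where a faithful representation tensor-generates the whole category $\Rep(H)$. Once this identification is established, Proposition \ref{s118} gives $\mathfrak{g}_{\mathcal{R}(V)} \simeq \Lie(H) = \mathfrak{g}$, and by construction the isomorphism is the natural map.

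The only real obstacle is the Tannakian generation step in the converse: one must know that a faithful representation of an algebraic (or affine) group generates its entire representation category under the four tensor operations. This is a standard result proved by expressing the coordinate ring of the image group as a quotient of a symmetric algebra on matrix coefficients of $V$ and $V^{\vee}$; I will simply cite it from \cite{deligneM1982} rather than reproduce the argument. Everything else is bookkeeping with \ref{s118} and \ref{s118a}, together with the observation that faithfulness of $\mathfrak{g} \to \mathfrak{gl}_V$ transfers to injectivity of $\Lie(G) \to \Lie(\GL_V)$ in the forward direction.
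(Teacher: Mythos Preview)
Your proposal is correct and is precisely the unpacking of the paper's one-line proof (``Immediate consequence of the proposition''), which relies on \ref{s118a} in exactly the way you describe. One small refinement in the converse: a $\mathfrak{g}$-subquotient of an $H$-representation is only guaranteed to be an $H$-representation when $H$ is connected, so you should replace $H$ by $H^{\circ}$ (same Lie algebra, $V$ still faithful) before identifying $\mathcal{R}(V)$ with the ring of representations arising from $H^{\circ}$.
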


\begin{proof}
Immediate consequence of the proposition.
\end{proof}

\begin{remark}
\label{s118b} Let $\mathfrak{g}{}\rightarrow\mathfrak{g}{}\mathfrak{l}{}(V)$
be a faithful representation of $\mathfrak{g}{}$, and let $\mathcal{R}(V){}$
be the ring of representations of $\mathfrak{g}{}$ generated by $V$. When is
$\mathfrak{g}{}\rightarrow\mathfrak{g}{}_{\mathcal{R}(V){}}$ an isomorphism?
It is easy to show, for example, when $\mathfrak{g}{}=[\mathfrak{g}%
{},\mathfrak{g}{}]$. In particular, $\mathfrak{g}{}\rightarrow\mathfrak{g}%
{}_{\mathcal{R}(V){}}$ is an isomorphism when $\mathfrak{g}{}$ is semisimple.
For an abelian Lie group $\mathfrak{g}{}$, $\mathfrak{g}{}\rightarrow
\mathfrak{g}{}_{\mathcal{R}(V){}}$ is an isomorphism if and only if
$\mathfrak{g}{}\rightarrow\mathfrak{g}{}\mathfrak{l}{}(V)$ is a semisimple
representation and there exists a lattice in $\mathfrak{g}{}$ on which the
characters of $\mathfrak{g}{}$ in $V$ take integer values. For the Lie algebra
in (\cite{bourbakiLie}, I, \S 5, Exercise 6), $\mathfrak{g}{}\rightarrow
\mathfrak{g}{}_{\mathcal{R}(V){}}$ is \textit{never} an isomorphism.
\end{remark}

\label{s118c}Let $\mathcal{R}{}$ be the ring of all representations of
$\mathfrak{g}{}$. When $\mathfrak{g}{}\rightarrow\mathfrak{g}{}_{\mathcal{R}%
{}}$ is an isomorphism one says that \emph{Tannaka duality holds for}
$\mathfrak{g}{}$. The aside shows that Tannaka duality holds for
$\mathfrak{g}{}$ if $[\mathfrak{g}{},\mathfrak{g}{}]=\mathfrak{g}{}$. On the
other hand, Example \ref{s116} shows that Tannaka duality fails when
$[\mathfrak{g}{},\mathfrak{g}{}]\neq\mathfrak{g}$, and even that
$\mathfrak{g}{}_{\mathcal{R}{}}$ has infinite dimension in this case.

\section{An adjoint to the functor $\Lie$}

Let $\mathfrak{g}{}$ be a Lie group, and let $\mathcal{R}{}$ be the ring of
all representations of $\mathfrak{g}{}$ . We define $G(\mathfrak{g}{})$ to be
the Tannaka dual of the neutral tannakian category $\Rep(\mathfrak{g}{})$.
Recall that this means that $G(\mathfrak{g}{})$ is the affine group whose
$R$-points for any $k$-algebra $R$ are the families%
\[
\lambda=(\lambda_{V})_{V\in\mathcal{R}{}},\quad\lambda_{V}\in
\End_{R\text{-linear}}(V(R)),
\]
such that

\begin{itemize}
\item $\lambda_{V\otimes W}=\lambda_{V}\otimes\lambda_{W}$ for all
$V\in\mathcal{R}{};$

\item if $xv=0$ for all $x\in\mathfrak{g}{}$ and $v\in V$, then $\lambda
_{V}v=v$ for all $\lambda\in$$G(\mathfrak{g}{})(R)$ and $v\in V(R)$;

\item for every $\mathfrak{g}{}$-homomorphism $\beta\colon V\rightarrow W$,%
\[
\lambda_{W}\circ\beta=\beta\circ\lambda_{V}.
\]

\end{itemize}

\noindent For each $V\in\mathcal{R}{}$, there is a representation $r_{V}$ of
$G(\mathfrak{g}{})$ on $V$ defined by%
\[
r_{V}(\lambda)v=\lambda_{V}v,\quad\lambda\in G(\mathfrak{g}{})(R),\quad v\in
V(R),\quad R\text{ a }k\text{-algebra,}%
\]
and $V\rightsquigarrow(V,r_{V})$ is an equivalence of categories%
\begin{equation}
\Rep(\mathfrak{g}{})\overset{\sim}{\longrightarrow}\Rep(G(\mathfrak{g}{})).
\label{e12}%
\end{equation}

\begin{lemma}
\label{s118e}The homomorphism $\eta\colon\mathfrak{g}{}\rightarrow
\Lie(G(\mathfrak{g}{}))$ is injective, and the composite of the functors%
\begin{equation}
\Rep(G{}(\mathfrak{g}{}%
))\xrightarrow{(V,r)\rightsquigarrow (V,dr)}\Rep(\Lie(G{}(\mathfrak{g}%
{})))\xrightarrow{\eta^{\vee}}\Rep(\mathfrak{g}{}) \label{e11}%
\end{equation}
is an equivalence of categories.
\end{lemma}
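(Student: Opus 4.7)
My plan is to deduce both assertions from Proposition~\ref{s118} together with the equivalence (\ref{e12}). The first step is to identify $\Lie(G(\mathfrak{g}{}))$ with the Lie algebra $\mathfrak{g}{}_{\mathcal{R}{}}$ of endomorphisms of the ring $\mathcal{R}{}$ of all representations of $\mathfrak{g}{}$, in such a way that $\eta$ becomes the canonical map $x\mapsto(r_{V}(x))_{V\in\mathcal{R}{}}$. By (\ref{e12}), every representation of $\mathfrak{g}{}$ arises from a representation of $G(\mathfrak{g}{})$, so Proposition~\ref{s118} supplies the required isomorphism $\mathfrak{g}{}_{\mathcal{R}{}}\simeq\Lie(G(\mathfrak{g}{}))$; a comparison of $k[\varepsilon]$-points shows that it carries $x\mapsto(r_{V}(x))$ to $\eta$. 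Injectivity of $\eta$ then follows from Lemma~\ref{s117}, applied to any faithful finite-dimensional representation of $\mathfrak{g}{}$ (which exists in characteristic zero by Ado's theorem, and is automatically in $\mathcal{R}{}$).

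For the equivalence of categories, write $F$ for the composite (\ref{e11}) and $T$ for the equivalence $V\rightsquigarrow(V,r_{V})$ of (\ref{e12}). I plan to show that $F\circ T$ is the identity functor on $\Rep(\mathfrak{g}{})$; since $T$ is already an equivalence, this will force $F$ to be one as well. For $V\in\Rep(\mathfrak{g}{})$ the defining formula $r_{V}(\lambda)=\lambda_{V}$ shows, on passing to $k[\varepsilon]$-points, that $dr_{V}$ is the projection $\Lie(G(\mathfrak{g}{}))\simeq\mathfrak{g}{}_{\mathcal{R}{}}\rightarrow\End(V)$, $(\alpha_{W})_{W}\mapsto\alpha_{V}$. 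Composing with $\eta(x)=(r_{W}(x))_{W}$ recovers $r_{V}(x)$, so $dr_{V}\circ\eta$ is literally the original $\mathfrak{g}{}$-action on $V$, giving $F\circ T=\id_{\Rep(\mathfrak{g}{})}$ on objects; the verification for morphisms is automatic because both functors act as the identity on underlying vector spaces.

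The main obstacle is just careful bookkeeping: one must unravel the definitions of $G(\mathfrak{g}{})$ and of its Lie algebra simultaneously to see that $\eta$ coincides with the canonical map $\mathfrak{g}{}\rightarrow\mathfrak{g}{}_{\mathcal{R}{}}$, and that $dr_{V}\circ\eta$ agrees with the starting $\mathfrak{g}{}$-action on the nose. Once this identification is in place, injectivity reduces to Lemma~\ref{s117} and the equivalence (\ref{e11}) reduces to the already-established (\ref{e12}), so no genuinely new computation is required.
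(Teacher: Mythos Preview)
Your proof is correct and follows essentially the same route as the paper: both reduce injectivity of $\eta$ to Lemma~\ref{s117} via the identification $\Lie(G(\mathfrak{g}{}))\simeq\mathfrak{g}{}_{\mathcal{R}}$ of Proposition~\ref{s118} together with Ado's theorem, and both obtain the equivalence by recognizing the composite (\ref{e11}) as a quasi-inverse to (\ref{e12}). Your argument simply spells out in more detail the verification that $F\circ T=\id$, which the paper compresses into the single phrase ``is a quasi-inverse to the functor in (\ref{e12}).''
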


\begin{proof}
According to (\ref{s118}), $\Lie($$G(\mathfrak{g}{}))\simeq\mathfrak{g}%
_{\mathcal{R}{}}$, and so the first assertion follows from (\ref{s117}) and
Ado's theorem. The composite of the functors in (\ref{e11}) is a quasi-inverse
to the functor in (\ref{e12}).
\end{proof}

\begin{lemma}
\label{s118f}The affine group $G(\mathfrak{g}{})$ is connected.
\end{lemma}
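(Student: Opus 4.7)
My plan is to prove connectedness by showing that $G(\mathfrak{g})$ admits no nontrivial finite quotient. In characteristic zero this is equivalent to connectedness: every finite affine group over $k$ is étale, and $\pi_0(G(\mathfrak{g}))$ is the cofiltered limit of all finite étale quotients, so vanishing of the latter forces $\pi_0 = 1$.

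To execute this, I would let $q\colon G(\mathfrak{g})\twoheadrightarrow H$ be a surjection onto a finite affine group $H$ and aim to deduce $H = 1$. Choose a faithful representation $V$ of $H$, and pull it back along $q$ to a representation $r_V$ of $G(\mathfrak{g})$ whose image in $\GL_V$ coincides with the image of $H$. Via the equivalence (\ref{e12}), $V$ carries a corresponding $\mathfrak{g}$-module structure, and this Lie algebra action is given by the composite
\[
\mathfrak{g}\longrightarrow\Lie(G(\mathfrak{g}))\xrightarrow{\Lie(q)}\Lie(H)\longrightarrow\mathfrak{gl}(V).
\]
Since $H$ is finite and $\operatorname{char} k = 0$, $H$ is étale, so $\Lie(H) = 0$. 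Hence $\mathfrak{g}$ acts trivially on $V$.

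At this point the second bullet in the definition of $G(\mathfrak{g})$ closes the argument automatically: the condition ``if $xv = 0$ for all $x\in\mathfrak{g}$ and $v\in V$, then $\lambda_V v = v$'' forces every $\lambda\in G(\mathfrak{g})(R)$ to act as the identity on $V(R)$, so $r_V$ is the trivial representation. Since $V$ was chosen faithful as an $H$-representation and $q$ is surjective, this forces $H = 1$. I expect the only nontrivial point to be the justification of the structural criterion ``connected $\iff$ every finite quotient is trivial'' for the potentially pro-algebraic affine group $G(\mathfrak{g})$; once that is granted, the rest is a clean combination of Lemma \ref{s118e} (so that the Lie algebra of $G(\mathfrak{g})$ genuinely records $\mathfrak{g}$) and the defining property of $G(\mathfrak{g})$.
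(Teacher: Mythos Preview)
Your argument is correct. The only point you flagged---that an affine group scheme over a field of characteristic zero is connected if and only if it has no nontrivial finite quotient---holds because $\mathcal{O}(G(\mathfrak g))$ is a filtered union of finitely generated Hopf subalgebras, so a nontrivial idempotent would already live in some algebraic quotient $G_i$, and $G\twoheadrightarrow G_i\twoheadrightarrow\pi_0(G_i)$ would then be a nontrivial finite quotient. With that granted, your use of $\Lie(H)=0$ together with Lemma~\ref{s118e} and the second defining condition of $G(\mathfrak g)$ is clean.

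The paper argues differently. It invokes the Tannakian criterion that $G$ is connected if and only if every representation $V$ for which the category of subquotients of $V^n$ is stable under tensor products is trivial, and then verifies this criterion on the $\mathfrak g$-side by a structural case analysis: first for abelian $\mathfrak g$ (via \ref{s116}), then for semisimple $\mathfrak g$ (via the highest-weight theory of \ref{s103}, since a nontrivial highest weight would grow unboundedly under tensor powers), and finally for general $\mathfrak g$ by passing through the radical. Your route bypasses this case analysis entirely by pushing the work onto the single observation $\Lie(\text{finite})=0$, at the cost of importing the finite-quotient criterion for connectedness of pro-algebraic groups. The paper's route stays closer to the representation theory it has already set up and makes explicit use of \ref{s103}, which is thematically consistent with the rest of the article; yours is shorter and independent of the classification of simple $\mathfrak g$-modules.
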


\begin{proof}
We have to show that if a representation $V$ of $\mathfrak{g}{}$ has the
property that the category of subquotients of direct sums of copies of
$V\mathfrak{\ }$is stable under tensor products, then $V$ is a trivial
representation. When $\mathfrak{g}{}=k$, this is obvious (cf. \ref{s116}), and
when $\mathfrak{g}{}$ is semisimple it follows from (\ref{s103}).

Let $V$ be a representation of $\mathfrak{g}{}$ with the property. It follows
from the commutative case that the radical of $\mathfrak{g}{}$ acts trivially
on $V$, and then it follows from the semisimple case that $\mathfrak{g}{}$
itself acts trivially.
\end{proof}

\begin{proposition}
\label{s119}The pair $($$G(\mathfrak{g}{}),\eta)$ is universal: for any
algebraic group $H$ and $k$-algebra homomorphism $a\colon\mathfrak{g}%
\rightarrow\Lie(H)$, there is a unique homomorphism $b\colon$$G(\mathfrak{g}%
{})\rightarrow H$ such that $a=\Lie(b)\circ\eta$. In other words, the map
sending a homomorphism $b\colon G(\mathfrak{g}{})\rightarrow H$ to the
homomorphism $\Lie(b)\circ\eta\colon\mathfrak{g}{}\rightarrow\Lie(H)$ is a
bijection%
\begin{equation}
\Hom_{\text{affine groups}}(G{}(\mathfrak{g}{}),H)\rightarrow\Hom_{\text{Lie
algebras}}(\mathfrak{g}{},\Lie(H)). \label{e9}%
\end{equation}
If $a$ is surjective and $\Rep($$G(\mathfrak{g}{}))$ is semisimple, then $b$
is surjective.
\end{proposition}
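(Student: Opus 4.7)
The plan is to produce $b$ by Tannaka duality from a tensor functor between representation categories, check the Lie-algebra compatibility on individual representations, derive uniqueness from the same dictionary, and deduce surjectivity from the categorical faithful-flatness criterion combined with the Lie-to-group correspondence in characteristic zero.

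Concretely, given $a\colon\mathfrak{g}\to\Lie(H)$, every representation $(V,\rho)$ of $H$ yields the $\mathfrak{g}$-representation $d\rho\circ a$, so pullback along $a$ defines an exact $k$-linear tensor functor $F\colon\Rep(H)\to\Rep(\mathfrak{g})$ commuting with the forgetful fiber functors. Composing $F$ with a quasi-inverse of the equivalence \eqref{e12} produces a fiber-functor-preserving tensor functor $\tilde F\colon\Rep(H)\to\Rep(G(\mathfrak{g}))$, and elementary Tannaka duality (\ref{s25}) converts $\tilde F$ into a homomorphism $b\colon G(\mathfrak{g})\to H$. To verify $\Lie(b)\circ\eta=a$, I would observe that for any $(V,\rho)$ in $\Rep(H)$ the $G(\mathfrak{g})$-representation $b^{\#}V$ is $\rho\circ b$, whose differential pulled back along $\eta$ is $d\rho\circ\Lie(b)\circ\eta$; by construction $b^{\#}V$ corresponds under \eqref{e12} to $F(V)=(V,d\rho\circ a)$, and the equivalence \eqref{e11} then forces $d\rho\circ\Lie(b)\circ\eta=d\rho\circ a$. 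Taking $V$ faithful for $H$ makes $d\rho$ injective and yields the claim. Uniqueness is the same dictionary run in reverse: any $b'$ satisfying the equation induces the same pullback functor $F$, hence the same $\tilde F$, hence equals $b$ by \ref{s25}.

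For the surjectivity statement, $G(\mathfrak{g})$ is connected by \ref{s118f}, so the image of $b$ lies in $H^{\circ}$ and I may replace $H$ by $H^{\circ}$ and assume $H$ connected. I would then invoke the criterion of \cite{deligneM1982}, 2.21: $b$ is faithfully flat iff $\tilde F$ is fully faithful with essential image closed under subobjects. Full faithfulness is immediate from surjectivity of $a$: a $k$-linear map between $H$-modules is $\mathfrak{g}$-equivariant iff $\Lie(H)$-equivariant, iff $H$-equivariant (the last step by the characteristic-zero correspondence for connected $H$). Closure under subobjects follows similarly: a subobject of $b^{\#}V$ corresponds via \eqref{e12} to a $\mathfrak{g}$-submodule of $V$, which is $\Lie(H)$-stable by surjectivity of $a$ and hence $H$-stable by the same correspondence. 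Semisimplicity of $\Rep(G(\mathfrak{g}))$ enters to match the subobject criterion with the subquotient version, so that no obstruction coming from quotients appears.

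The main obstacle I anticipate is keeping the Tannakian dictionary sharp in the middle step, i.e., carefully tracking how the equivalences \eqref{e12} and \eqref{e11} translate between the three incarnations $\Rep(\mathfrak{g})$, $\Rep(\Lie(G(\mathfrak{g})))$, and $\Rep(G(\mathfrak{g}))$ of essentially the same category, so that the identity $d\rho\circ\Lie(b)\circ\eta=d\rho\circ a$ really does drop out of the construction of $b$. Once that identification is pinned down, existence, uniqueness, and surjectivity all follow formally.
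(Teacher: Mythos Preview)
Your approach is essentially the paper's: both produce $b$ from the tensor functor $\Rep(H)\to\Rep(\Lie(H))\xrightarrow{a^{\vee}}\Rep(\mathfrak{g})\simeq\Rep(G(\mathfrak{g}))$ via Tannaka duality, and both read off uniqueness from the resulting commutative square of representation categories. Your verification of $\Lie(b)\circ\eta=a$ is just a more explicit version of the paper's one-line assertion that the constructed $b$ ``acts as $a$ on the Lie algebras''.

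The one point worth correcting is your account of where semisimplicity enters the surjectivity argument. The criterion in \cite{deligneM1982}, 2.21(a), is already stated for \emph{subobjects}, not subquotients, so there is no ``matching'' to do. In fact your own argument for closure under subobjects (reduce to $H=H^{\circ}$, then use that $\Lie(H)$-stable subspaces of $H$-modules are $H$-stable in characteristic zero) works without any semisimplicity hypothesis and thus proves a slightly stronger statement than the proposition. The paper, by contrast, argues more tersely: it stops at ``$a^{\vee}$ fully faithful $\Rightarrow$ $\Rep(H)\to\Rep(G(\mathfrak{g}))$ fully faithful $\Rightarrow$ $b$ surjective'', and it is in this last implication that semisimplicity of $\Rep(G(\mathfrak{g}))$ is tacitly used --- a subobject of $\tilde F(V)$ is then a direct summand, so the associated idempotent descends to $\Rep(H)$ by full faithfulness, giving closure under subobjects for free. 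So semisimplicity is the paper's shortcut, not yours.
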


\begin{proof}
From a homomorphism $b\colon$$G(\mathfrak{g}{})\rightarrow H$, we get a
commutative diagram%
\[
\begin{CD}
\Rep(H) @>{b^{\vee}}>> \Rep(G(\mathfrak{g}{}))\\
@V{\textrm{fully faithful}}VV@V{\simeq}V{(\ref{s118e})}V\\
\Rep(\Lie(H)) @>{a^{\vee}}>> \Rep(\mathfrak{g}{})
\end{CD}\quad a\overset{\text{{\tiny def}}}{=}\Lie(b)\circ\eta.
\]

If $a=0$, then $a^{\vee}$ sends all objects to trivial objects, and so the
functor $b^{\vee}$ does the same, which implies that the image of $b$ is $1$.
Hence (\ref{e9}) is injective.

From a homomorphism $a\colon\mathfrak{g}{}\rightarrow\Lie(H)$, we get a tensor
functor%
\[
\Rep(H)\rightarrow\Rep(\Lie(H))\overset{a^{\vee}}{\longrightarrow
}\Rep(\mathfrak{g}{})\simeq\Rep(G{}(\mathfrak{g}{}))
\]
and hence a homomorphism $G(\mathfrak{g}{})\rightarrow H$, which acts as $a$
on the Lie algebras. Hence (\ref{e9}) is surjective.

If $a$ is surjective, then $a^{\vee}$ is fully faithful, and so
$\Rep(H)\rightarrow\Rep($$G(\mathfrak{g}{}))$ is fully faithful, which implies
that $G(\mathfrak{g}{})\rightarrow G$ is surjective.
\end{proof}

\begin{proposition}
\label{s120}For any finite extension $k^{\prime}\supset k$ of fields,
$G(\mathfrak{g}{}_{k^{\prime}})\simeq G(\mathfrak{g}{})_{k^{\prime}}$.
\end{proposition}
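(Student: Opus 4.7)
The plan is to characterize both sides by the same universal property. By Proposition~\ref{s119} applied over $k'$, $G(\mathfrak{g}_{k'})$ represents the functor $H\mapsto \Hom_{k'\text{-Lie}}(\mathfrak{g}_{k'},\Lie(H))$ on the category of affine $k'$-groups. So by Yoneda it suffices to show that $G(\mathfrak{g})_{k'}$ also represents this functor, i.e.\ to exhibit a bijection
\[
\Hom_{k'\text{-grp}}(G(\mathfrak{g})_{k'},H)\;\simeq\;\Hom_{k'\text{-Lie}}(\mathfrak{g}_{k'},\Lie(H))
\]
that is natural in the affine $k'$-group $H$.

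To produce such a bijection I would chain together three standard adjunctions. Because $k'/k$ is finite, the Weil restriction $\Pi_{k'/k}$ from affine $k'$-groups to affine $k$-groups is right adjoint to base change $(-)_{k'}$. Then
\begin{align*}
\Hom_{k'\text{-grp}}(G(\mathfrak{g})_{k'},H)
&\;\simeq\;\Hom_{k\text{-grp}}(G(\mathfrak{g}),\Pi_{k'/k}H) \\
&\;\simeq\;\Hom_{k\text{-Lie}}(\mathfrak{g},\Lie(\Pi_{k'/k}H)) \\
&\;\simeq\;\Hom_{k\text{-Lie}}(\mathfrak{g},\Lie(H)) \\
&\;\simeq\;\Hom_{k'\text{-Lie}}(\mathfrak{g}_{k'},\Lie(H)),
\end{align*}
where the first line is the base-change/Weil-restriction adjunction, the second invokes Proposition~\ref{s119} over $k$, the third uses that $\Lie$ commutes with Weil restriction (so $\Lie(\Pi_{k'/k}H)$ is $\Lie(H)$ regarded as a $k$-Lie algebra by restriction of scalars), and the fourth is tensor--hom adjunction for Lie algebra homomorphisms. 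Each step is manifestly functorial in $H$, so composing yields a natural isomorphism of representable functors and Yoneda gives $G(\mathfrak{g}_{k'})\simeq G(\mathfrak{g})_{k'}$.

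The only step requiring a direct computation is the identification $\Lie(\Pi_{k'/k}H)\simeq \Lie(H)$. This falls out of the definitions: $\Lie(\Pi_{k'/k}H)=\ker\bigl((\Pi_{k'/k}H)(k[\varepsilon])\to(\Pi_{k'/k}H)(k)\bigr)=\ker\bigl(H(k'[\varepsilon])\to H(k')\bigr)=\Lie(H)$, using $k[\varepsilon]\otimes_k k'=k'[\varepsilon]$, with the $k$-scalar action coming from the inclusion $k\hookrightarrow k'$ acting on $\varepsilon$. I expect this to be the only genuine obstacle; once it is in hand the proposition is a bookkeeping exercise in adjunctions. A variant of the argument that avoids Weil restriction would instead recognise that the $k'$-linear tensor category $\Rep(\mathfrak{g}_{k'})$ is the base change of $\Rep(\mathfrak{g})$ from $k$ to $k'$ and appeal to compatibility of Tannaka duality with scalar extension, but that demands setting up base change of neutral tannakian categories, which is heavier than the adjunction argument above.
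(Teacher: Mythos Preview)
Your proof is correct and essentially identical to the paper's own argument: the paper also verifies that $(G(\mathfrak{g})_{k'},\eta_{k'})$ has the universal property of $G(\mathfrak{g}_{k'})$ via the same chain of adjunctions (base-change/Weil-restriction, then Proposition~\ref{s119} over $k$, then tensor--hom), and it carries out the same computation $\Lie(H_{\ast})=\ker\bigl(H(k'[\varepsilon])\to H(k')\bigr)=\Lie(H)$ that you single out. The only cosmetic difference is that the paper merges your third and fourth displayed isomorphisms into a single step.
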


\begin{proof}
More precisely, we prove that the pair $(G(\mathfrak{g}{})_{k^{\prime}}%
,\eta_{k^{\prime}})$ obtained from $(G(\mathfrak{g}{}),\eta)$ by extension of
the base field has the universal property characterizing $(G(\mathfrak{g}%
{}_{k^{\prime}}),\eta)$. Let $H$ be an algebraic group over $k^{\prime}$, and
let $H_{\ast}$ be the group over $k$ obtained from $H$ by restriction of the
base field. Then%
\begin{align*}
\Hom_{k^{\prime}}(\mathcal{G}{}(\mathfrak{g}{})_{k^{\prime}},H)  &
\simeq\Hom_{k}(\mathcal{G}{}(\mathfrak{g}{}),H_{\ast})\quad\text{(universal
property of }H_{\ast}\text{)}\\
&  \simeq\Hom_{k}(\mathfrak{g}{},\Lie(H_{\ast}))\quad\text{(\ref{s119})}\\
&  \simeq\Hom_{k^{\prime}}(\mathfrak{g}{}_{k^{\prime}},\Lie(H)).
\end{align*}
For the last isomorphism, note that%
\[
\Lie(H_{\ast})\overset{\text{{\tiny def}}}{=}\Ker(H_{\ast}(k[\varepsilon
])\rightarrow H_{\ast}(k))\simeq\Ker(H(k^{\prime}[\varepsilon])\rightarrow
H(k^{\prime}))\overset{\text{{\tiny def}}}{=}\Lie(H).
\]
In other words, $\Lie(H_{\ast})$ is $\Lie(H)$ regarded as a Lie algebra over
$k$ (instead of $k^{\prime}$), and the isomorphism is simply the canonical
isomorphism in linear algebra,%
\[
\Hom_{k\text{-linear}}(V,W)\simeq\Hom_{k^{\prime}\text{-linear}}(V\otimes
_{k}k^{\prime},W)
\]
($V,W$ vector spaces over $k$ and $k^{\prime}$ respectively).
\end{proof}

The next theorem shows that, when $\mathfrak{g}{}$ is semisimple,
$G(\mathfrak{g}{})$ is a semisimple algebraic group with Lie algebra
$\mathfrak{g}{}$, and any other semisimple group with Lie algebra
$\mathfrak{g}{}$ is a quotient of $G(\mathfrak{g}{})$; moreover, the centre of
$G(\mathfrak{g}{})$ has character group $P/Q$.

\begin{theorem}
\label{s122}Let $\mathfrak{g}{}$ be a semisimple Lie algebra.

\begin{enumerate}
\item The homomorphism $\eta\colon\mathfrak{g}{}\rightarrow\Lie($%
$G(\mathfrak{g}{}))$ is an isomorphism.

\item The group $G(\mathfrak{g}{})$ is a connected semisimple group.

\item For any algebraic group $H$ and isomorphism $a\colon\mathfrak{g}%
{}\rightarrow\Lie(H)\mathfrak{{}}$, there exists a unique isogeny $b\colon
$$G(\mathfrak{g}{})\rightarrow H^{\circ}$ such that $a=\Lie(b)\circ\eta$.

\item Let $Z$ be the centre of $G(\mathfrak{g}{})$; then $X^{\ast}(Z)\simeq
P/Q$.
\end{enumerate}
\end{theorem}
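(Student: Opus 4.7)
The plan is to assemble the four parts from the scaffolding already built in the previous sections; most of the work has been done, so the task is largely one of bookkeeping.

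For (a), Lemma \ref{s118e} gives that $\eta$ is injective, and Proposition \ref{s118} identifies $\Lie(G(\mathfrak{g}{}))$ with $\mathfrak{g}{}_{\mathcal{R}{}}$ (for $\mathcal{R}{}$ the ring of all representations of $\mathfrak{g}{}$). Surjectivity of $\eta$ therefore amounts to the statement that Tannaka duality holds for $\mathfrak{g}{}$ in the sense of \ref{s118c}. Remark \ref{s118b} asserts this whenever $\mathfrak{g}{}=[\mathfrak{g}{},\mathfrak{g}{}]$, and in particular whenever $\mathfrak{g}{}$ is semisimple. For (b), $G(\mathfrak{g}{})$ is connected by Lemma \ref{s118f}, and it is algebraic because, by (a), its Lie algebra is $\mathfrak{g}{}$, which is finite-dimensional (Proposition \ref{s118a}). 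Since $\mathfrak{g}{}$ is semisimple, Proposition \ref{s110} shows $G(\mathfrak{g}{})$ is a connected semisimple group.

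For (c), Proposition \ref{s119} produces a unique homomorphism $b\colon G(\mathfrak{g}{})\rightarrow H$ with $a=\Lie(b)\circ\eta$, and since $G(\mathfrak{g}{})$ is connected, $b$ factors through $H^{\circ}$. The hypothesis that $a$ is an isomorphism makes $a$ surjective; combining this with the semisimplicity of $\Rep(\mathfrak{g}{})\simeq\Rep(G(\mathfrak{g}{}))$ (Weyl's theorem), the second half of \ref{s119} yields that $b\colon G(\mathfrak{g}{})\to H^{\circ}$ is surjective. From (a) and the fact that $a$ is an isomorphism, $\Lie(b)$ is an isomorphism, so $\Lie(\Ker(b))=0$ and $\Ker(b)$ is finite. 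Hence $b$ is an isogeny, and its uniqueness is already furnished by \ref{s119}.

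For (d), the strategy is to apply \ref{s56} to the neutral tannakian category $\Rep(\mathfrak{g}{})\simeq\Rep(G(\mathfrak{g}{}))$, which is semisimple with $\End(V)=k$ for every simple object, by Weyl's theorem and \ref{s105}. The centre $Z$ of $G(\mathfrak{g}{})$ is of multiplicative type because $G(\mathfrak{g}{})$ is reductive by (b); after assuming $\mathfrak{g}{}$ is split (or passing to a splitting field via \ref{s120} and descending), \ref{s56} identifies $X^{\ast}(Z)$ with $M(\Rep(\mathfrak{g}{}))$. The highest-weight bijection \ref{s104}--\ref{s104a} identifies the set of isomorphism classes of simple objects of $\Rep(\mathfrak{g}{})$ with $P_{++}$, whence $M(\Rep(\mathfrak{g}{}))\simeq M(P_{++})$ in the notation of \ref{s106}.

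The main obstacle, and the only genuinely nontrivial input, is to identify $M(P_{++})$ with $P/Q$. This is exactly Exercise \ref{s107}: using the equivalence relation description \ref{s56a}, one must show $\varpi\sim\varpi^{\prime}\iff\varpi-\varpi^{\prime}\in Q$. The inclusion $\Rightarrow$ is immediate from the fact that every weight of $V_{\varpi_1}\otimes\cdots\otimes V_{\varpi_m}$ is congruent to $\varpi_1+\cdots+\varpi_m$ modulo $Q$; the reverse inclusion requires the character formulae and the explicit decomposition of tensor products of simple representations to exhibit, given $\varpi-\varpi^{\prime}\in Q$, a common subrepresentation of a suitable iterated tensor product. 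Once this is in hand, $M(P_{++})\simeq P/Q$ and so $X^{\ast}(Z)\simeq P/Q$.
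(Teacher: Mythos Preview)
Your argument is correct, but it diverges from the paper's in parts (a) and (d).

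For (a) you appeal to the assertion in \ref{s118b}/\ref{s118c} that $\mathfrak{g}\to\mathfrak{g}_{\mathcal{R}}$ is an isomorphism whenever $\mathfrak{g}=[\mathfrak{g},\mathfrak{g}]$, which the paper states but does not prove. The paper's own proof of (a) is self-contained and bypasses this: since $\Rep(\mathfrak{g})\simeq\Rep(G(\mathfrak{g}))$ is semisimple, $G(\mathfrak{g})$ is reductive, so $\Lie(G(\mathfrak{g}))$ is reductive by \ref{s110} and decomposes as $\eta(\mathfrak{g})\oplus\mathfrak{a}\oplus\mathfrak{c}$; any nonzero complement would carry a nontrivial representation of $G(\mathfrak{g})$ whose differential is trivial on $\mathfrak{g}$, contradicting the equivalence of (\ref{e11}). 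Parts (b) and (c) agree with the paper's treatment.

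For (d) the routes genuinely differ. You propose to compute $M(P_{++})\simeq P/Q$ directly from tensor-product decompositions (Exercise \ref{s107}, which the paper explicitly marks as not done by the author) and then read off $X^{\ast}(Z)$ via \ref{s56}. The paper instead defers to the next section: it builds a split maximal torus $T(\mathfrak{h})\subset G(\mathfrak{g})$ with $X^{\ast}(T(\mathfrak{h}))\simeq P$ and identifies $Z$ as $\bigcap_{\alpha\in R}\Ker(\alpha\colon T(\mathfrak{h})\to\mathbb{G}_m)$ using the adjoint representation (Theorem \ref{s123}(c)), obtaining $X^{\ast}(Z)\simeq P/Q$ without any tensor-product combinatorics. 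In the paper's logic, the isomorphism $M(P_{++})\simeq P/Q$ is a \emph{consequence} of \ref{s123} combined with \ref{s56}, not an input to it. The paper's approach is cleaner precisely because it avoids what you correctly identify as the one genuinely nontrivial step in your argument.
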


\begin{proof}
(a) Because $\Rep($$G(\mathfrak{g}{}))$ is semisimple, $G(\mathfrak{g}{})$ is
reductive. Therefore $\Lie($$G(\mathfrak{g}{}))$ is reductive (\ref{s110}),
and so $\Lie($$G(\mathfrak{g}{}))=\eta(\mathfrak{g}{})\oplus\mathfrak{a}%
{}\oplus\mathfrak{c}{}$ with $\mathfrak{a}{}$ is semisimple and $\mathfrak{c}%
{}$ commutative. If $\mathfrak{a}{}$ or $\mathfrak{c}{}$ is nonzero, then
there exists a nontrivial representation $r$ of $G(\mathfrak{g}{})$ such that
$\Lie(r)$ is trivial on $\mathfrak{g}{}$. But this is impossible because
$\eta$ defines an equivalence $\Rep($$G(\mathfrak{g}{}))\rightarrow
\Rep(\mathfrak{g}{})$.

(b) Now (\ref{s110}) shows that $G$ is semisimple.

(c) Proposition \ref{s119} shows that there exists a unique homomorphism $b$
such that $a=\Lie(b)\circ\eta$, which is an isogeny because $\Lie(b)$ is an isomorphism.

(d) In the next subsection, we show that if $\mathfrak{g}{}$ is splittable,
then $X^{\ast}(Z)\simeq P/Q$ (as abelian groups). As $\mathfrak{g}{}$ becomes
splittable over a finite Galois extension, this implies (d).
\end{proof}

\begin{remark}
\label{s122r}The isomorphism $X^{\ast}(Z)\simeq P/Q$ in (d) commutes with the
natural actions of $\Gal(k^{\mathrm{al}}/k)$.
\end{remark}

\section{Split semisimple algebraic groups}

Let $(\mathfrak{g}{},\mathfrak{h)}$ be a split semisimple Lie algebra, and let
$P$ and $Q$ be the corresponding weight and root lattices. The action of
$\mathfrak{h}{}$ on a $\mathfrak{g}{}$-module $V$ decomposes it into a direct
sum $V=\bigoplus_{\varpi\in P}V_{\varpi}$. Let $D(P)$ be the diagonalizable
group attached to $P$. Then $\Rep(D(P))$ has a natural identification with the
category of $P$-graded vector spaces. The functor $(V,r_{V})\mapsto
(V,(V_{\varpi})_{\omega\in P})$ is an exact tensor functor $\Rep(\mathfrak{g}%
{})\rightarrow\Rep(D(P))$, and hence defines a homomorphism $D(P)\rightarrow
$$G(\mathfrak{g}{})$. Let $T{}(\mathfrak{h)}$ be the image of this homomorphism.

\begin{theorem}
\label{s123}With the above notations:

\begin{enumerate}
\item The group $T{}(\mathfrak{h}{})$ is a split maximal torus in
$G(\mathfrak{g}{})$, and $\eta$ restricts to an isomorphism $\mathfrak{h}%
{}\rightarrow\Lie(T(\mathfrak{h}{}))$.

\item The map $D(P)\rightarrow T{}(\mathfrak{h}{})$ is an isomorphism;
therefore, $X^{\ast}(T(\mathfrak{h}{}))\simeq P$.

\item The centre of $G(\mathfrak{g}{})$ is contained in $T{}(\mathfrak{h}{})$
and equals%
\[
\bigcap\nolimits_{\alpha\in R}\Ker(\alpha\colon T(\mathfrak{h}{}%
)\rightarrow\mathbb{G}_{m})
\]
(and so has character group $P/Q$).
\end{enumerate}
\end{theorem}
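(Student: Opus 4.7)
The three parts are interlocked: I would establish (b) first, use it to deduce (a), and leverage both to prove (c). The unifying observation is that $T(\mathfrak{h}{})=D(P)$ acts on each representation via its $P$-grading, while under $\eta\colon\mathfrak{g}{}\xrightarrow{\sim}\Lie G(\mathfrak{g}{})$ (\ref{s122}(a)), an element $x\in\mathfrak{h}{}$ acts on a weight space $V_{\mu}$ by the scalar $\mu(x)$.

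For (b), $T(\mathfrak{h}{})$ is by construction the image of $D(P)\to G(\mathfrak{g}{})$, so it suffices to show the kernel is trivial. An $R$-point $z\in D(P)(R)=\Hom(P,R^{\times})$ lies in the kernel iff $\mu(z)=1$ for every weight $\mu$ occurring in some representation of $\mathfrak{g}{}$. By \ref{s104} each fundamental weight $\varpi_{i}$ is the highest weight of a simple representation $V_{\varpi_{i}}$ (hence a weight thereof), and $-\varpi_{i}$ is a weight of the dual $V_{\varpi_{i}}^{\vee}$. Since the $\varpi_{i}$ form a $\mathbb{Z}{}$-basis of $P$, these conditions force $z=1$; hence $D(P)\xrightarrow{\sim}T(\mathfrak{h}{})$, and in particular $T(\mathfrak{h}{})$ is a split torus of rank $\dim\mathfrak{h}{}$ with $\Lie T(\mathfrak{h}{})=\Hom(P,k)$.

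For (a), the observation in the plan identifies $\eta(x)$, for $x\in\mathfrak{h}{}$, with the element $\mu\mapsto\mu(x)$ of $\Lie D(P)=\Hom(P,k)$, so $\eta$ carries $\mathfrak{h}{}$ into $\Lie T(\mathfrak{h}{})$. The resulting map $\mathfrak{h}{}\to\Hom(P,k)$ is the canonical one induced by the pairing $\mathfrak{h}{}\times\mathfrak{h}{}^{\vee}\to k$ restricted to $P\subset\mathfrak{h}{}^{\vee}$; it is an isomorphism because $P$ is a full-rank $\mathbb{Z}{}$-lattice in $\mathfrak{h}{}^{\vee}$. Maximality of $T(\mathfrak{h}{})$ then follows because any split torus $T^{\prime}\supsetneq T(\mathfrak{h}{})$ would, by \ref{s115b}, furnish a commutative toral subalgebra of $\mathfrak{g}{}$ strictly containing the Cartan $\mathfrak{h}{}$, contradicting maximality of the latter as a toral subalgebra.

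For (c), $G(\mathfrak{g}{})$ is connected reductive by \ref{s122}(b), so its centre $Z$ lies in every maximal torus; hence $Z\subset T(\mathfrak{h}{})$. Via $\eta$, the adjoint action of $T(\mathfrak{h}{})\simeq D(P)$ on $\mathfrak{g}{}=\mathfrak{h}{}\oplus\bigoplus\nolimits_{\alpha\in R}\mathfrak{g}{}_{\alpha}$ is trivial on $\mathfrak{h}{}$ and multiplies $\mathfrak{g}{}_{\alpha}$ by the character $\alpha$. Using $Z=\Ker(\Ad)$, which is standard for connected reductive groups in characteristic zero, an element $z\in T(\mathfrak{h}{})$ is central iff $\alpha(z)=1$ for all $\alpha\in R$; therefore $Z=\bigcap\nolimits_{\alpha\in R}\Ker(\alpha)$ and $X^{\ast}(Z)\simeq P/Q$. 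The main obstacle is (b), which relies essentially on the existence half \ref{s104} of the classification of simple $\mathfrak{g}{}$-representations; the two standard facts about reductive groups invoked in (c) would be imported rather than reproved here.
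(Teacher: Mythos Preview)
Your argument is correct and, for (a) and (b), is essentially the paper's argument with the order reversed: the paper also shows $D(P)\hookrightarrow T(\mathfrak h)$ by letting $D(P)$ act on $\bigoplus_i V_{\varpi_i}$, and then deduces $\eta\colon\mathfrak h\xrightarrow{\sim}\Lie T(\mathfrak h)$ and maximality from the fact that $\mathfrak h$ is a Cartan subalgebra. One small point in your (a): you only exclude a strictly larger \emph{split} torus, but maximality requires excluding any larger torus. This is harmless, since any torus has abelian Lie algebra, and $\mathfrak h$ is already its own centralizer in $\mathfrak g$; the appeal to \ref{s115b} (semisimplicity of the elements) is not needed.

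The genuine divergence is in (c). You import two standard facts about connected reductive groups, namely that the centre lies in every maximal torus and that $Z=\Ker(\Ad)$, and then read off $Z=\bigcap_{\alpha}\Ker(\alpha)$ from the root-space decomposition. The paper instead stays inside its tannakian framework: by \ref{s54}--\ref{s56} the centre $Z$ is identified with $D(M(P_{++}))$, and the existence of a group homomorphism $P\to M(P_{++})$ (extending the universal tensor map, cf.\ \ref{s106}) yields a factorization $Z=D(M(P_{++}))\to D(P)=T(\mathfrak h)$, proving $Z\subset T(\mathfrak h)$ without invoking the structure theory of reductive groups. The paper then derives $Z=\Ker(\Ad)$ directly from $\mathfrak z(\mathfrak g)=0$ rather than citing it. Your route is shorter if those facts are taken as known; the paper's route is self-contained within the Lie-algebra/tensor-category setup, and simultaneously completes the identification $M(P_{++})\simeq P/Q$ promised in \ref{s106}.
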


\begin{proof}
(a) The torus $T(\mathfrak{h}{})$ is split because it is the quotient of a
split torus. Certainly, $\eta$ restricts to an injective homomorphism
$\mathfrak{h}{}\rightarrow\Lie(T(\mathfrak{h}{}))$. It must be surjective
because otherwise $\mathfrak{h}{}$ wouldn't be a Cartan subalgebra of
$\mathfrak{g}{}$. The torus $T(\mathfrak{h}{})$ must be maximal because
otherwise $\mathfrak{h}$ wouldn't be equal to its normalizer.

(b) Let $V$ be the representation $\bigoplus V_{\varpi}$ of $\mathfrak{g}{}$
where $\varpi$ runs through a set of fundamental weights. Then $G(\mathfrak{g}%
{})$ acts on $V$, and the map $D(P)\rightarrow\GL(V)$ is injective. Therefore,
$D(P)\rightarrow T(\mathfrak{h})$ is injective.

(c) A gradation on $\Rep(\mathfrak{g}{})$ is defined by a homomorphism
$P\rightarrow M(P_{++})$ (see \ref{s106}), and hence by a homomorphism
$D(M(P_{++}))\rightarrow T(\mathfrak{h}{})$. This shows that the centre of $G$
is contained in $T(\mathfrak{h}{})$. Because the centre of $\mathfrak{g}{}$ is
trivial, the kernel of the adjoint map $\Ad\colon G\rightarrow
\GL_{\mathfrak{g}{}}$ is the centre $Z(G)$ of $G$, and so the kernel of
$\Ad|T(\mathfrak{h})$ is $Z(G)\cap T(\mathfrak{h}{})=Z(G)$. But%
\[
\Ker(\Ad|T(\mathfrak{\mathfrak{h}{}}))=\bigcap_{\alpha\in R}\Ker(\alpha),
\]
so $Z(G)$ is as described.
\end{proof}

\begin{theorem}
\label{s123a}Let $T$ and $T^{\prime}$ be split maximal tori in $G(\mathfrak{g}%
{})$. Then $T^{\prime}=gTg^{-1}$ for some $g\in G(\mathfrak{g}{})(k).$
\end{theorem}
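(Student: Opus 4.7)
The plan is to reduce conjugacy of split maximal tori in $G\overset{\text{def}}{=}G(\mathfrak{g}{})$ to conjugacy of splittable Cartan subalgebras of $\mathfrak{g}{}=\Lie(G)$ (using the identification of Theorem \ref{s122}(a)), which is a classical theorem of Chevalley (see \cite{bourbakiLie}, VIII, \S 3).

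First I would check that if $T$ is a split maximal torus of $G$, then $\mathfrak{h}_T\overset{\text{def}}{=}\Lie(T)$ is a splittable Cartan subalgebra of $\mathfrak{g}{}$. By Lemma \ref{s115b} it is commutative and consists of semisimple elements. Since $G$ is connected semisimple (Theorem \ref{s122}(b)), a maximal torus in $G$ equals its own centralizer, so on Lie algebras $c_{\mathfrak{g}{}}(\mathfrak{h}_T)=\mathfrak{h}_T$ and hence $\mathfrak{h}_T$ is a Cartan subalgebra of $\mathfrak{g}{}$. It is splittable because $T$ is split: the weights of the adjoint action of $T$ on $\mathfrak{g}{}$ are characters defined over $k$, which is precisely the condition that $\ad(\mathfrak{h}_T)$ diagonalizes over $k$.

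Next, by the cited conjugacy theorem there exists $\sigma$ in the elementary group $\Aut_e(\mathfrak{g}{})$ --- generated by $\exp(\ad x)$ for nilpotent $x\in\mathfrak{g}{}$ --- with $\sigma(\mathfrak{h}_T)=\mathfrak{h}_{T'}$. Each generator lifts to $G(k)$ via the Tannakian description of $G$: for fixed nilpotent $x$ and each $(V,r_V)\in\mathcal{R}{}$, the endomorphism $r_V(x)$ is nilpotent, so $E_V:=\exp(r_V(x))\in\GL(V)(k)$ is given by a finite sum. The family $(E_V)_V$ satisfies tensor compatibility (from $\exp(r_V(x)\otimes\id+\id\otimes r_W(x))=\exp(r_V(x))\otimes\exp(r_W(x))$, the two summands commuting), is trivial on $\1$, and is natural with respect to $\mathfrak{g}{}$-morphisms, so it defines an element $e(x)\in G(k)$ whose action on $\mathfrak{g}{}$ via $\Ad$ is $\exp(\ad x)$. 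Composing such elements lifts $\sigma$ to some $g\in G(k)$.

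Finally, $\Ad(g)(\mathfrak{h}_T)=\sigma(\mathfrak{h}_T)=\mathfrak{h}_{T'}$, so $\Lie(gTg^{-1})=\Lie(T')$; since a torus in a connected reductive group is the identity component of the centralizer of its Lie algebra, $gTg^{-1}=T'$. The one piece of genuine substance imported from outside is the Bourbaki conjugacy theorem for splittable Cartan subalgebras; the Tannakian lifting of nilpotents and the recovery of $T$ from $\Lie(T)$ are formal consequences of the setup.
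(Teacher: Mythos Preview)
Your proof is correct and follows essentially the same approach as the paper's: lift the elementary automorphisms $\exp(\ad x)$ (for nilpotent $x$) that witness conjugacy of the Cartan subalgebras $\Lie(T)$ and $\Lie(T')$ to elements of $G(\mathfrak{g})(k)$ via the Tannakian description, and then recover the tori from their Lie algebras. Your version is more carefully fleshed out (verifying that $\Lie(T)$ is a splittable Cartan subalgebra, checking the tensor compatibility of the family $(\exp r_V(x))_V$, and justifying the final step $\Lie(gTg^{-1})=\Lie(T')\Rightarrow gTg^{-1}=T'$), but the substance is identical to the paper's terse argument.
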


\begin{proof}
Let $x$ be a nilpotent element of $\mathfrak{g}{}$. For any representation
$(V,r_{V})$ of $\mathfrak{g}{}$, $e^{r_{V}(x)}\in G(\mathfrak{g})(k)$. There
exist nilpotent elements $x_{1},\ldots,x_{m}$ in $\mathfrak{g}{}$ such that%
\[
e^{\ad(x_{1})}\cdots e^{\ad(x_{m})}\Lie(T)=\Lie(T^{\prime}).
\]
Let $g=e^{\ad(x_{1})}\cdots e^{\ad(x_{m})}$; then $gTg^{-1}=T^{\prime}$
because they have the same Lie algebra.
\end{proof}

\section{Classification}

We can now read off the classification theorems for split semisimple algebraic
groups from the similar theorems for split semisimple Lie algebras.

Let $(G,T)$ be a split semisimple algebraic group. Because $T$ is
diagonalizable, the $k$-vector space $\mathfrak{g}{}$ decomposes into
eigenspaces under its action:%
\[
\mathfrak{g}{}=\bigoplus_{\alpha\in X^{\ast}(T)}\mathfrak{g}^{\alpha}.
\]
The roots of $(G,T)$ are the nonzero $\alpha$ such that $\mathfrak{g}^{\alpha
}\neq0$. Let $R$ be the set of roots of $(G,T)$.

\begin{proposition}
\label{s124}The set of roots of $(G,T)$ is a reduced root system $R$ in
$V\overset{\text{{\tiny def}}}{=}X^{\ast}(T)\otimes\mathbb{Q}{}$; moreover,%
\begin{equation}
Q(R)\subset X^{\ast}(T)\subset P(R). \label{e13}%
\end{equation}

\end{proposition}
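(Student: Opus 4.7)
The plan is to transfer the root-system structure from the split semisimple Lie algebra $(\mathfrak{g},\mathfrak{h})$ to $(G,T)$, and then use the universal group $G(\mathfrak{g})$ of Theorem \ref{s122} to trap $X^{\ast}(T)$ between $Q(R)$ and $P(R)$.

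First I would identify $\mathfrak{h} := \Lie(T)$ as a splitting Cartan subalgebra of $\mathfrak{g}$. By Lemma \ref{s115b}, $\mathfrak{h}$ is commutative and consists of semisimple elements, and because $T$ is split, differentiation yields a natural isomorphism $X^{\ast}(T) \otimes_{\mathbb{Z}} k \xrightarrow{\sim} \mathfrak{h}^{\vee}$. Consequently the decomposition $\mathfrak{g} = \bigoplus_{\alpha} \mathfrak{g}^{\alpha}$ under $T$ coincides with the eigenspace decomposition of $\mathfrak{g}$ under $\mathfrak{h}$, via $\alpha \leftrightarrow d\alpha$. Maximality of $T$ as a \emph{split} torus forces $\mathfrak{h}$ to equal its own centralizer in $\mathfrak{g}$: otherwise there would exist a commuting semisimple element outside $\mathfrak{h}$, which together with $T$ would generate a strictly larger split subtorus. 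Hence $\mathfrak{h}$ is a splitting Cartan, and $\chi \mapsto d\chi$ carries $R \subset X^{\ast}(T)$ bijectively onto the classical root system $R' \subset \mathfrak{h}^{\vee}$ of $(\mathfrak{g},\mathfrak{h})$. This correspondence is compatible with the $\mathbb{Q}$-structures, so $R \subset V$ inherits from $R'$ the property of being a reduced root system.

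The inclusion $Q(R) \subset X^{\ast}(T)$ is immediate since every root is by definition a character of $T$. For $X^{\ast}(T) \subset P(R)$, I would invoke Theorem \ref{s122}(c) applied to the isomorphism $a : \mathfrak{g} \xrightarrow{\sim} \Lie(G)$ to obtain an isogeny $b : G(\mathfrak{g}) \to G$ whose differential is an isomorphism. The image $b(T(\mathfrak{h}))$ is then a split maximal torus of $G$, and is $G(k)$-conjugate to $T$; replacing $T$ by such a conjugate (which preserves both the root system and the character lattice) one reduces to the case $T = b(T(\mathfrak{h}))$. The restriction of $b$ is an isogeny $T(\mathfrak{h}) \to T$, dualizing to an injection $X^{\ast}(T) \hookrightarrow X^{\ast}(T(\mathfrak{h})) = P$ via Theorem \ref{s123}(b). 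Under the root-system identification from the first step this embeds $X^{\ast}(T)$ as a subgroup of $P = P(R)$ containing $R$, establishing (\ref{e13}).

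The main obstacle I anticipate is the opening step: rigorously verifying that $\mathfrak{h}$ is self-centralizing in $\mathfrak{g}$. This hinges on maximality of $T$ as a split torus (not merely as a torus) together with the standard fact that any commuting family of semisimple elements of $\mathfrak{g}$ lies in the Lie algebra of a split subtorus of $G$. Everything else is either classical for root systems of split semisimple Lie algebras or a direct application of the universal property of $G(\mathfrak{g})$ already assembled in the paper.
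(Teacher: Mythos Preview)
Your argument is essentially correct, and for the first half (transferring the root system from $(\mathfrak{g},\mathfrak{h})$ to $(G,T)$) it coincides with the paper's, which simply asserts that $(\mathfrak{g},\mathfrak{h})$ is split semisimple and that the roots match. Two small simplifications are available. First, your worry about self-centralization is misplaced: by Definition~\ref{s115a}, $T$ is a maximal torus (which happens to be split), not merely maximal among split tori; so $C_G(T)^\circ=T$ and hence $c_{\mathfrak g}(\mathfrak h)=\Lie(C_G(T))=\mathfrak h$ directly, without the delicate step of producing a larger split torus from a commuting semisimple element. Second, the conjugation step is unnecessary: since $\Lie(b)\circ\eta=\id_{\mathfrak g}$, the image $b(T(\mathfrak h))$ already has Lie algebra $\mathfrak h=\Lie(T)$, and two connected subgroups with the same Lie algebra coincide in characteristic zero, so $b(T(\mathfrak h))=T$ on the nose.

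For the inclusion $X^{\ast}(T)\subset P(R)$ you take a genuinely different route from the paper. The paper's one-line proof reads this off directly from the Lie algebra side: once $V$ is identified with a $\mathbb Q$-subspace of $\mathfrak h^{\vee}$, every $\chi\in X^{\ast}(T)$ occurs as an $\mathfrak h$-weight of some $\mathfrak g$-module (namely, any $G$-representation in which $\chi$ appears), and the standard representation theory of split semisimple Lie algebras (\S3) says such weights lie in $P$. Your argument instead invokes the universal group: the isogeny $G(\mathfrak g)\to G$ from Theorem~\ref{s122}(c) restricts to an isogeny $T(\mathfrak h)\to T$, dualizing to an embedding $X^{\ast}(T)\hookrightarrow X^{\ast}(T(\mathfrak h))=P$ by Theorem~\ref{s123}(b). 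Both are valid; the paper's is shorter and purely Lie-theoretic, while yours stays entirely within the tannakian machinery the paper has built and makes the role of $G(\mathfrak g)$ as the simply connected cover explicit.
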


\begin{proof}
Let $\mathfrak{g}{}=\Lie G$ and $\mathfrak{h}{}=\Lie T$. Then $(\mathfrak{g}%
{},\mathfrak{h}{})$ is a split semisimple Lie algebra, and, when we identify
$V$ with a subspace of $\mathfrak{h}{}^{\vee}\simeq X^{\ast}(T)\otimes k$, the
roots of $(G,T)$ coincide with the roots of $(\mathfrak{g}{},\mathfrak{h}{}$)
and (\ref{e13}) holds.
\end{proof}

By a \emph{diagram} $(V,R,X)$, we mean a reduced root system $(V,R)$ over
$\mathbb{Q}{}$ and a lattice $X$ in $V$ that is contained between $Q(R)$ and
$P(R)$.

\begin{theorem}
[Existence]\label{s125}Every diagram arises from a split semisimple algebraic
group over $k$.
\end{theorem}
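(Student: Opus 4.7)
The plan is to reduce to the existence theorem for split semisimple Lie algebras (Serre) and then cut down the character lattice of the maximal torus from $P$ to $X$ by passing to an appropriate central quotient. Given the diagram $(V,R,X)$, Serre's theorem supplies a split semisimple Lie algebra $(\mathfrak{g},\mathfrak{h})$ over $k$ whose root system, under the identification $V\subset\mathfrak{h}^{\vee}$, is $R$. Applying the functor of Section 6, I obtain the connected semisimple algebraic group $G(\mathfrak{g})$ with split maximal torus $T(\mathfrak{h})$. By Theorem \ref{s123}, $(G(\mathfrak{g}),T(\mathfrak{h}))$ has root system $R$ and $X^{\ast}(T(\mathfrak{h}))=P$; by Theorems \ref{s122}(d) and \ref{s123}(c), its centre $Z$ lies in $T(\mathfrak{h})$ and has character group $P/Q$. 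Thus $G(\mathfrak{g})$ already realizes the diagram with the largest admissible lattice $P$; the remaining task is to realize the prescribed lattice $X$.

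Since $Q\subset X\subset P$, the quotient $P/X$ is a quotient of $P/Q$, so the diagonalizable group $N:=D(P/X)$ is a (finite) subgroup of $Z=D(P/Q)$ and therefore central in $G(\mathfrak{g})$. Set
$$G:=G(\mathfrak{g})/N,\qquad T:=T(\mathfrak{h})/N.$$
Then $G$ is a connected algebraic group with $\Lie(G)=\mathfrak{g}$ (as $N$ is finite), hence semisimple by Proposition \ref{s110}. The torus $T$ is split, and the exact sequence $1\to N\to T(\mathfrak{h})\to T\to 1$ identifies $X^{\ast}(T)$ with the kernel of $P\to P/X$, which is exactly $X$. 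Every root of $(G(\mathfrak{g}),T(\mathfrak{h}))$ lies in $Q\subset X$, so its image in $P/X$ is zero; consequently each root is trivial on $N$ and descends to a character of $T$. Since the adjoint action of $G$ on $\mathfrak{g}$ factors through $G/Z(G)$, the weight decomposition of $\mathfrak{g}$ under $T$ coincides with that under $T(\mathfrak{h})$, so the root system of $(G,T)$ is $R$. Finally $T$ is maximal in $G$ because $\Lie(T)=\mathfrak{h}$ is a Cartan subalgebra of $\mathfrak{g}$, and we conclude that $(G,T)$ is a split semisimple algebraic group with diagram $(V,R,X)$.

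There is no genuine obstacle once Theorems \ref{s122} and \ref{s123} are in place: the combinatorial existence input is Serre's theorem for Lie algebras, the tensor-category machinery produces the simply connected form $G(\mathfrak{g})$ in one stroke, and the identification $X^{\ast}(Z)\cong P/Q$ turns the passage from $P$ to any intermediate lattice $X$ into the straightforward exercise of quotienting by the central subgroup $D(P/X)\subset Z$. The only point requiring care is the double-check that roots (being in $Q$) descend to characters of $T$ and that the resulting torus is still maximal, both of which follow directly from $\Lie(T)=\mathfrak{h}$.
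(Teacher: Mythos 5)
Your proof is correct and follows essentially the same route as the paper: Theorem \ref{s126} realizes the diagram $(V,R,X)$ as the Tannaka dual of the subcategory $\Rep(\mathfrak{g})^{X}$, which it identifies with exactly your quotient $G(\mathfrak{g})/N$ where $N=\bigcap_{\chi\in X/Q}\Ker(\chi\colon Z\to\mathbb{G}_m)=D(P/X)$. You merely phrase the construction directly at the level of central quotients rather than through the tannakian subcategory, and your verification that $X^{\ast}(T)=X$, that the roots descend, and that $T$ remains maximal is sound.
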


More precisely, we have the following result.

\begin{theorem}
\label{s126}Let $(V,R,X)$ be a diagram, and let $(\mathfrak{g}{}%
,\mathfrak{h}{})$ be a split semisimple Lie algebra over $k$ with root system
$(V\otimes k,X)$. Let $\Rep(\mathfrak{g}{})^{X}$ be the full subcategory of
$\Rep(\mathfrak{g}{})$ whose objects are those whose simple components have
heighest weight in $X$. Then $\Rep(\mathfrak{g}{})^{X}$ is a tannakian
subcategory of $\Rep(\mathfrak{g}{})$, and there is a natural functor
$\Rep(\mathfrak{g}{})^{X}\rightarrow\Rep(D(X))$. The Tannaka dual $(G,T)$ of
this functor is a split semisimple algebraic group with diagram $(V,R,X)$.
\end{theorem}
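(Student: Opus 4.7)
The plan is to verify in order: (1) $\Rep(\mathfrak{g}{})^X$ is closed under the tannakian operations; (2) the $\mathfrak{h}{}$-weight decomposition supplies a tensor functor to $\Rep(D(X))$; and (3) the Tannaka dual $(G,T)$ of this functor is split semisimple with diagram $(V,R,X)$. For (1), semisimplicity of $\Rep(\mathfrak{g}{})$ makes closure under direct sums and subquotients automatic. Tensor products stay in $\Rep(\mathfrak{g}{})^X$ because every weight of a simple $V_{\varpi}$ lies in $\varpi + Q \subset X$ by \ref{s103}(d), so the weights of $V_{\varpi} \otimes V_{\varpi'}$, and hence the highest weights of its simple constituents, remain in $X$. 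Closure under duals follows from the $W$-stability of $X$: the Weyl group preserves both $Q$ and $P$ and acts trivially on $P/Q$ (since $s_{\alpha}(\varpi) - \varpi \in \mathbb{Z}{}\alpha \subset Q$), so any lattice between $Q$ and $P$ is $W$-stable, and in particular $-w_{0}\varpi \in X$ whenever $\varpi \in X$. For (2), since all weights of any $V \in \Rep(\mathfrak{g}{})^X$ lie in $X$, the decomposition $V = \bigoplus_{\chi \in X} V_{\chi}$ defines a $D(X)$-action, yielding a tensor functor $\Rep(\mathfrak{g}{})^X \to \Rep(D(X))$ compatible with the forgetful fiber functor.

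For (3), Tannaka duality converts the inclusion $\Rep(\mathfrak{g}{})^X \hookrightarrow \Rep(\mathfrak{g}{})$ and the functor from (2) into a surjection $G(\mathfrak{g}{}) \twoheadrightarrow G$ and a homomorphism $D(X) \to G$; I take $T$ to be the image of the latter. To see $\Lie(G) = \mathfrak{g}{}$ (whence $G$ is semisimple by \ref{s110}), I use that the adjoint representation lies in $\Rep(\mathfrak{g}{})^X$, as its highest weights are the highest roots of the simple factors of $\mathfrak{g}{}$, which lie in $Q \subset X$. Since this representation is faithful, the kernel of $G(\mathfrak{g}{}) \twoheadrightarrow G$ acts trivially on $\mathfrak{g}{}$ and so lies in the (finite) centre of $G(\mathfrak{g}{})$, giving it trivial Lie algebra. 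For $T$, the composition $D(P) \twoheadrightarrow D(X) \to T \hookrightarrow G$ agrees with $D(P) \cong T(\mathfrak{h}{}) \hookrightarrow G(\mathfrak{g}{}) \twoheadrightarrow G$ (both describe the $P$-grading action on representations), so by \ref{s123}(a) the image of $\Lie(D(X)) \to \Lie(G)$ equals $\mathfrak{h}{}$; hence $\Lie(T) = \mathfrak{h}{}$. Since $\mathfrak{h}{}$ is a Cartan subalgebra of $\mathfrak{g}{}$, $T$ is a split maximal torus, and the root system of $(G,T)$ computed via the $T$-action on $\mathfrak{g}{}$ is precisely $R$.

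The principal obstacle is the identification $X^{\ast}(T) = X$. Since $T$ is a quotient of $D(X)$, we have $X^{\ast}(T) \subset X$, so I must show every $\chi \in X$ descends to $T$; equivalently, $\chi$ must arise as an $\mathfrak{h}{}$-weight of some $V \in \Rep(\mathfrak{g}{})^X$. Given $\chi \in X$, the $W$-stability of $X$ produces $w \in W$ with $w\chi \in X \cap P_{++} = X_{++}$; then $V_{w\chi} \in \Rep(\mathfrak{g}{})^X$, and $\chi = w^{-1}(w\chi)$ is a Weyl translate of the highest weight of $V_{w\chi}$, hence itself a weight of $V_{w\chi}$. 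Therefore $\chi \in X^{\ast}(T)$, which completes the identification $X^{\ast}(T) = X$ and shows that $(G,T)$ has diagram $(V,R,X)$.
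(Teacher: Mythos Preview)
Your argument is correct and complete, but it takes a different route from the paper's. The paper first handles the extremal case $X=P$ (where $\Rep(\mathfrak{g})^{P}=\Rep(\mathfrak{g})$ and the dual is $(G(\mathfrak{g}),T(\mathfrak{h}))$ by Theorem~\ref{s123}), and then for general $X$ defines the finite central subgroup $N=\bigcap_{\chi\in X/Q}\Ker(\chi\colon Z(G(\mathfrak{g}))\to\mathbb{G}_m)$, identifies $\Rep(\mathfrak{g})^{X}$ with the representations on which $N$ acts trivially, and concludes that the dual is $(G(\mathfrak{g})/N,\,T(\mathfrak{h})/N)$, whose character lattice is automatically $X$. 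Your approach is instead a direct verification: you check closure under the tannakian operations by hand (using that all weights of $V_{\varpi}$ lie in $\varpi+Q$ and that $W$ acts trivially on $P/Q$), produce the functor to $\Rep(D(X))$ from the weight decomposition, and then read off $\Lie(G)=\mathfrak{g}$, $\Lie(T)=\mathfrak{h}$, and $X^{\ast}(T)=X$ from explicit weight-theoretic arguments. The paper's approach is shorter and exhibits the structural picture---every such $(G,T)$ is a central quotient of the simply connected $(G(\mathfrak{g}),T(\mathfrak{h}))$, with the quotients parametrized by sublattices of $P/Q$---while yours is more self-contained and makes transparent exactly which facts about weights (notably $W$-stability of $X$ and the existence of a dominant Weyl translate) are doing the work. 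One small point: you invoke \ref{s110}, which requires $G$ connected; this holds because $G$ is a quotient of the connected group $G(\mathfrak{g})$ (\ref{s118f}), but it would be worth saying so.
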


\begin{proof}
When $X=Q$, $(G,T)=(G(\mathfrak{g}{}),T(\mathfrak{\mathfrak{h}{}}{}))$, and
the statement follows from Theorem \ref{s123}. For an arbitrary $X$, let%
\[
N=\bigcap\nolimits_{\chi\in X/Q}\Ker(\chi\colon Z(G(\mathfrak{g}%
{}))\rightarrow\mathbb{G}_{m}).
\]
Then $\Rep(\mathfrak{g}{})^{X}$ is the subcategory of $\Rep(\mathfrak{g}{})$
on which $N$ acts trivially, and so it is a tannakian category with Tannaka
dual $G(\mathfrak{g}{})/N$. Now it is clear that $(G(\mathfrak{g}%
{})/N,T(\mathfrak{h})/N)$ is the Tannaka dual of $\Rep(\mathfrak{g}{}%
)^{X}\rightarrow\Rep(D(X))$, and that it has diagram $(V,R,X)$.
\end{proof}

\begin{theorem}
[Isogeny]\label{s127}Let $(G,T)$ and $(G^{\prime},T^{\prime})$ be split
semisimple algebraic groups over $k$, and let $(V,R,X)$ and $(V,R^{\prime
},X^{\prime})$ be their associated diagrams. Any isomorphism $V\rightarrow
V^{\prime}$ sending $R\ $onto $R^{\prime}$ and $X$ into $X^{\prime}$ arises
from an isogeny $G\rightarrow G^{\prime}$ mapping $T$ onto $T^{\prime}$.
\end{theorem}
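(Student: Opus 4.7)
The plan is to reduce to the classical isomorphism theorem for split semisimple Lie algebras and then use the Tannakian machinery of Theorems~\ref{s122} and~\ref{s126} to descend from simply-connected covers to the given groups.

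Set $(\mathfrak{g}{},\mathfrak{h}{})=(\Lie G,\Lie T)$ and $(\mathfrak{g}{}',\mathfrak{h}{}')=(\Lie G',\Lie T')$; by Proposition~\ref{s124}, these are split semisimple Lie algebras with root systems $(V,R)$ and $(V',R')$. I first apply the classical isomorphism theorem (\cite{bourbakiLie}, VIII, \S 4, Th\'eor\`eme~2) to convert the given isomorphism $\psi$ (which maps $R$ to $R'$) into an isomorphism $\varphi\colon(\mathfrak{g}{},\mathfrak{h}{})\to(\mathfrak{g}{}',\mathfrak{h}{}')$. By Theorem~\ref{s122}(c), applied to both $\varphi$ and $\varphi^{-1}$, the map $\varphi$ lifts to mutually inverse isomorphisms $\tilde\varphi\colon G(\mathfrak{g}{})\xrightarrow{\sim} G(\mathfrak{g}{}')$ whose differential is $\varphi$; uniqueness of the split maximal torus with prescribed Lie algebra then forces $\tilde\varphi(T(\mathfrak{h}{}))=T(\mathfrak{h}{}')$.

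Next, Theorem~\ref{s126} presents $G\simeq G(\mathfrak{g}{})/N$ and $G'\simeq G(\mathfrak{g}{}')/N'$, with $N=\bigcap_{\chi\in X/Q}\ker(\chi)\subset Z(G(\mathfrak{g}{}))$ and similarly for $N'$. Using $\tilde\varphi$ I identify the two centres; the common character group $P/Q$ is matched with $P'/Q'$ via $\psi$, and the duality $X^{\ast}(N)=P/X$ (and likewise $X^{\ast}(N')=P'/X'$) converts the lattice hypothesis relating $X$ and $X'$ into a containment between the central subgroups $N$ and $N'$. This containment permits the factorization of the composite
\[
G(\mathfrak{g}{})\xrightarrow{\tilde\varphi} G(\mathfrak{g}{}')\twoheadrightarrow G'
\]
through $G=G(\mathfrak{g}{})/N$ (or, should the direction of the containment be reversed, the analogous composite through $G'$), producing the desired isogeny between $G$ and $G'$: on Lie algebras it is the isomorphism $\varphi$, its kernel sits in a centre and hence is finite, and by construction it carries $T$ onto $T'$ since $T(\mathfrak{h}{})$ already surjects onto both tori.

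The main obstacle at this stage is the bookkeeping: keeping straight the identifications $X^{\ast}(T(\mathfrak{h}{}))=P$, $X^{\ast}(Z(G(\mathfrak{g}{})))=P/Q$, $X^{\ast}(N)=P/X$ and transporting them through $\psi$, so that the lattice hypothesis translates into the exact containment of central subgroups needed for the factorization. Everything else is automatic from the classical Lie algebra theorem and from Theorem~\ref{s126}, and the construction shows that $T$ is carried onto $T'$.
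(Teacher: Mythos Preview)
Your approach is correct and is essentially the paper's argument rephrased group-theoretically. The paper, after producing the Lie algebra isomorphism $\beta\colon(\mathfrak{g},\mathfrak{h})\to(\mathfrak{g}',\mathfrak{h}')$, goes directly to the tensor functor $\Rep(\mathfrak{g}')^{X'}\to\Rep(\mathfrak{g})^{X}$ and invokes Tannaka duality to obtain $\alpha\colon G\to G'$, never naming $G(\mathfrak{g})$ or $N$ explicitly; your detour through the simply-connected covers $G(\mathfrak{g})\xrightarrow{\sim}G(\mathfrak{g}')$ and the quotient by $N$ is the same mechanism unwound on the group side rather than the category side.

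There is one genuine slip. You write that ``Theorem~\ref{s126} presents $G\simeq G(\mathfrak{g})/N$'', but \ref{s126} only \emph{constructs} a split semisimple group with diagram $(V,R,X)$ as $G(\mathfrak{g})/N$; it does not assert that an \emph{arbitrary} $(G,T)$ with that diagram is isomorphic to it --- that would already be an isomorphism theorem. What you actually need here is Theorem~\ref{s122}(c) applied to the identity $\mathfrak{g}=\Lie(G)$: this yields an isogeny $G(\mathfrak{g})\to G$, whose finite kernel is central and hence (by \ref{s123}(c)) lies in $T(\mathfrak{h})$; comparing $X^{\ast}(T)=X\hookrightarrow P=X^{\ast}(T(\mathfrak{h}))$ then identifies that kernel with $N$. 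The paper uses the equivalent identification $\Rep(G)=\Rep(\mathfrak{g})^{X}$ just as tacitly, so the gap is minor, but your citation of \ref{s126} for this step is misplaced. With that corrected, your factorization argument (and your caution about the direction of the containment, which is indeed determined by the lattice hypothesis once all identifications are pinned down) is fine.
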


\begin{proof}
Let $(\mathfrak{g}{},\mathfrak{h})$ and $(\mathfrak{g}{}^{\prime}%
,\mathfrak{h}{}^{\prime})$ be the split semisimple Lie algebras of $(G,T)$ and
$(G^{\prime},T^{\prime})$. An isomorphism $V\rightarrow V^{\prime}$ sending
$R$ onto $R^{\prime}$ and $X$ into $X^{\prime}$ arises from an isomorphism
$(\mathfrak{g}{},\mathfrak{h)}\overset{\beta}{\longrightarrow}(\mathfrak{g}%
{}^{\prime},\mathfrak{h}{}^{\prime})$. Now $\beta$ defines an exact tensor
functor $\Rep(\mathfrak{g}{}^{\prime})^{X^{\prime}}\rightarrow
\Rep(\mathfrak{g}{})^{X}$, and hence a homomorphism $\alpha\colon G\rightarrow
G^{\prime}$, which has the required properties.
\end{proof}

\bibliographystyle{cbe}
\bibliography{../../../refs}

\end{document}